\renewcommand{\a}[1]{\abs{#1}}
\newcommand{\f}[2]{\frac{#1}{#2}}
\newcommand{\R}[0]{\mathbb{R}}
\newcommand{\C}[0]{\mathbb{C}}
\newcommand{\mf}[1]{\mathfrak{#1}}
\newcommand{\mc}[1]{\mathcal{#1}}
\newcommand{\1}{\mathbf{1}}
\DeclareMathOperator*{\bigast}{\text{\scalebox{1.5}{$\ast$}}}
\DeclareMathOperator\supp{supp}
\renewcommand{\Re}{\operatorname{Re}}
\DeclareMathOperator{\Tr}{\operatorname{Tr}}
\renewcommand{\l}{\lambda}
\newcommand{\Sym}[0]{\text{Sym}}
\newtheorem{lem}{Lemma}
\newtheorem{thm}{Theorem}
\newcommand{\xt}[2]{\kappa_{#1,#2}}
\renewcommand{\H}{\mc{H}}
\newcommand{\answer}{A}
\begin{document}

\title{Uniform Bounds for Maximal Flat Periods on $SL(n,\R)$}
\author{Phillip Harris}

\begin{abstract}
Let $X$ be a compact locally symmetric space associated to $SL(n,\R)$ and $Y \subset X$ a maximal flat submanifold, not necessarily closed. Using a Euclidean approximation, we give an upper bound in the spectral aspect for Maass forms integrated against a smooth cutoff function on $Y$.
\end{abstract}

\maketitle

\section{Introduction}
\pagenumbering{arabic} 

Let $G$ be a noncompact semisimple Lie group with Iwasawa decomposition $G = NAK$ and Lie algebras $\mf{g} = \mf{n + a + k}$.  Let $G/K$ be the associated symmetric space and $X = \Gamma \backslash G / K$ a compact quotient. 
Let $(f_i)_i \in L^2(X)$ be an orthonormal basis of Maass forms with spectral parameters $\nu_i \in \mf{a}^*_\C$. 
Let $Y \subset X$ be a maximal flat submanifold (not necessarily closed) and $b \in C_c^\infty(Y)$ a smooth cutoff function. We are interested in the growth of the \textit{flat periods} 
\renewcommand{\P}{\mc{P}}
\[
\P_i = \int_Y f_i(x) b(x) dx
\]
with respect to $\nu_i$. 

There is a general bound for Laplace eigenfunctions on compact manifolds due to Zelditch \cite{zelditch} which in our case gives
\[
\abs{\P_i} \ll \norm{\nu_i}^{(\dim X - \dim Y - 1)/2} = \norm{\nu_i}^{(n^2-n-2)/4}
.\]

 Michels \cite{bart} was the first to study flat periods on higher rank locally symmetric spaces. His work implies\footnote{This result for Maass forms follows from applying the trace formula argument in Section \ref{sec:traceformula} to Michels' bound for spherical functions. } an averaged estimate for flat periods, as follows.
 Let $X$ be a compact locally symmetric space of noncompact type, of rank $r$. 
 Let $\Sigma \subset \mf{a}^*$ be the restricted roots of $X$ and $\Sigma^+$ be the positive roots.  
 Define the set of ``generic points'' $(\mf{a^*})^\text{gen} \subset \mf{a^*}$ to be the set of points that are regular and that do not lie in any proper subspace spanned by roots. 
  Fix a closed cone $D \subset (\mf{a^*})^\text{gen}$. Let $\beta : \mf{a}^* \to \R$ be the Plancherel density. Then there exists $C>0$ such that uniformly for $\l \in D$ we have
\[
\sum_{\norm{\Re \nu_i - \lambda} \leq C} \abs{\mc{P}_i}^2 \ll \beta(\lambda) (1+\norm{\l})^{-r}
.\]
In other words, consider all periods $\mc{P}_i$ such that $\Re \nu_i$ lies in a ball of fixed radius around $\l$, then their average norm squared is $\ll  (1+\norm{\l})^{-r}$. For certain choices of $X$ and $Y$ enumerated in \cite[Theorem 1.3]{bart} Michels proves a lower bound (replacing $\ll$ with $\asymp$).

For $X$ associated to $SL(n,\R)$, we give an estimate uniform in $\l$, elucidating the behavior on the non-generic set. 

\begin{thm}\label{thm:main}
Uniformly for $\l = \textup{diag}(\l_1,\dots,\l_n) \in \mf{a}^*$ we have
\begin{align*}\label{eq:main}
\sum_{\norm{\Re \nu_i - \l} \leq 1} \abs{\P_i}^2 \ll \tilde{\beta}(\l) (1+\norm{\l})^{-n+1}
L_n(\l)
\end{align*}
with the implied constant depending only on $n$ and the test function $b$. Here $\tilde{\beta}(\lambda) = \prod_{\alpha \in \Sigma^+} 1+\abs{\langle \lambda , \alpha \rangle}$ is essentially the maximum of $\beta$ taken over a ball around $\l$. The function $L_n(\l)$ is Weyl invariant, and in the Weyl chamber \footnote{This is opposite from the canonical positive chamber.} with $\l_1 \leq \dots \leq \l_n$, we define it as follows. Define $\log' x = \log(2 + x)$. For $n \neq 4$,
\[
L_n(\l) := \left( \log' \frac{\norm{\l}}{1 + \abs{\l_2} + \abs{\l_{n-1}}} \right)^{n-2}
,
\]
and for $n = 4$,
\[
L_4(\l) := 
\left( \log' \frac{\norm{\l}}{1 + \abs{\l_2} + \abs{\l_{3}}} \right)^2
\log' \frac{\norm{\l}}{1 + \abs{\l_1 - \l_2} + \abs{\l_3 - \l_4}}
.
\]
\end{thm}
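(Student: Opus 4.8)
The plan is to combine a pretrace-formula argument with a careful Euclidean analysis of the spherical kernel restricted to the flat. Fix a Weyl-invariant nonnegative test function $h$ on $\mf a^*_\C$ of Paley--Wiener type, realized as the spherical transform of a bi-$K$-invariant function $k$ on $G/K$ supported in a ball of radius $O(1)$ and normalized so that $h\geq 1$ on the spectral window $\{\norm{\Re\nu-\l}\leq 1\}$, with rapid decay off it. Nonnegativity of $h$ gives
\[
\sum_{\norm{\Re\nu_i-\l}\leq 1}\a{\P_i}^2\ \leq\ \sum_i h(\nu_i)\a{\P_i}^2\ =\ \sum_{\gamma\in\Gamma}\int_Y\int_Y k(x^{-1}\gamma y)\,b(x)\overline{b(y)}\,dx\,dy,
\]
the second step being the pretrace formula integrated against $b\otimes\overline b$ over $Y\times Y$. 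Since $\supp b$ is a fixed compact piece of the immersed flat $Y$, choosing the support radius of $k$ small enough leaves only the $\gamma$ stabilizing the relevant sheet of $Y$; these are finite in number and are treated identically, so we focus on $\gamma=e$.

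For the identity term, flatness is everything: $Y$ is totally geodesic, so writing points of $Y$ as $\exp(H)o$ with $H$ in (a translate of) $\mf a\cong\R^{n-1}$, the ambient distance between two nearby points of $Y$ equals the Euclidean distance $\a{H_1-H_2}$, whence $k(x^{-1}y)=\phi(H_1-H_2)$ where $\phi:=k|_{\mf a}$ is the restriction of the kernel to the flat, a Weyl-invariant function on $\mf a$. Transporting $b$ to $\tilde b$ in these coordinates and applying the Euclidean Plancherel theorem on $\mf a$, the identity term equals
\[
\int_{\mf a}\phi(H)\,\big(\tilde b\conv\tilde b^{\,*}\big)(H)\,dH\ =\ \int_{\mf a^*}\widehat\phi(\xi)\,\big|\widehat{\tilde b}(\xi)\big|^2\,d\xi .
\]
Everything now rests on the Euclidean Fourier transform $\widehat\phi$ of the spherical kernel restricted to the flat, integrated against the rapidly decaying weight $|\widehat{\tilde b}|^2$ concentrated near $\xi=0$.

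By spherical inversion $\phi(H)=\int_{\mf a^*}h(\nu)\,\varphi_\nu(\exp H)\,\beta(\nu)\,d\nu$, hence $\widehat\phi(\xi)=\int_{\mf a^*}h(\nu)\,\beta(\nu)\,\widehat{\varphi_\nu|_{\mf a}}(\xi)\,d\nu$, and the heart of the matter is the Fourier transform along $\mf a$ of the spherical function. Here I would invoke the Euclidean approximation of the title: via the Harish--Chandra integral/series expansion, $\varphi_\nu|_{\mf a}$ is, up to errors lower order in $\norm{\nu}$, modeled by the Euclidean spherical function of the root system, whose Fourier transform along $\mf a$ is the projection to $\mf a^*$ of the Kostant (coadjoint-orbit) measure supported on the weight polytope $\mathrm{conv}(W\nu)$ (a permutohedron for $SL(n,\R)$) --- a piecewise-polynomial density, singular along the lower strata, with $\int_{\mf a^*}\widehat{\varphi_\nu|_{\mf a}}(\xi)\,d\xi\ll 1$. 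Since $|c(\nu)|^{-2}=\beta(\nu)$ up to constants, the weight $\beta(\nu)$ from Plancherel cancels against the normalization of this measure, leaving $\widehat\phi(\xi)\ll\int_{\norm{\nu-\l}\lesssim 1}\mu_\nu(\xi)\,d\nu$ for a kernel $\mu_\nu$ that is (a smoothing of) $\tilde\beta(\nu)$ times the normalized Kostant density of $\mathrm{conv}(W\nu)$. In the generic interior regime this density near $\xi\approx 0$ has size $\asymp(\mathrm{vol}\,\mathrm{conv}(W\l))^{-1}\asymp(1+\norm{\l})^{-n+1}$, producing the main power $\tilde\beta(\l)(1+\norm{\l})^{-n+1}$ and recovering Michels' averaged bound.

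Finally, one inserts this bound into $\int_{\mf a^*}\widehat\phi(\xi)\,|\widehat{\tilde b}(\xi)|^2\,d\xi$ and analyses the non-generic regime. The factor $L_n(\l)$ is a logarithmic amplification that occurs when $\l$ approaches certain exceptional directions along which an unusually large number of the Weyl-translates $w\nu$ of the oscillatory phase become simultaneously near-coincident --- equivalently, along which many faces of $\mathrm{conv}(W\l)$ collapse and the corresponding poles of the Harish--Chandra $c$-function pile up. For $SL(n,\R)$ the dominant such direction is the highest-root ray $\R(e_1-e_n)$, along which $\l_2,\dots,\l_{n-1}$ vanish, whence the parameter $1+\a{\l_2}+\a{\l_{n-1}}$ measuring the distance to it; the resulting $\xi$- and $\nu$-integrals, of the local shape $\int\prod_{j}(\a{\xi_j}+\delta_j)^{-1}\,d\xi$ over the $n-2$ transverse directions, produce $\prod_j\log'(1/\delta_j)$, i.e.\ the power $(\log')^{n-2}$ of $\norm{\l}$ over that distance, while for $n=4$ a second exceptional direction $\R(e_1+e_2-e_3-e_4)$ contributes one further $\log'$-factor, an accident in the combinatorics of $W=S_4$. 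The main obstacle I anticipate is precisely this confluence in the last step: making the Euclidean approximation of $\varphi_\nu|_{\mf a}$ effective and uniform down to these exceptional directions (where the $c$-function has poles and the Euclidean model degenerates), and then carrying out the combinatorial count of overlapping near-coincidences uniformly in $\l$ --- which is where both the exponent $n-2$ and the $n=4$ anomaly are forced.
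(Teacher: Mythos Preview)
Your reduction is essentially the paper's: pretrace formula, drop to the identity term by shrinking the kernel support, then reduce to the Fourier transform along $\mf a$ of the spherical function near frequency $0$. The paper phrases this last object dually, via Duistermaat's exact formula $\varphi_\lambda(e^X)=\int_K e^{\langle i\lambda,\pi(k.X)\rangle}a(k.X)\,dk$, so that $\int_A\varphi_\lambda b$ becomes $\int_K\widehat c(k,-\pi^*(k^{-1}.\lambda))\,dk$ and the problem is precisely to estimate $I_n(\lambda)=\mathrm{Prob}_{k\in SO(n)}[\|\pi(k.\lambda)\|<1]$, i.e.\ the mass near $0$ of the pushforward of the orbit measure --- exactly your ``Kostant density near $\xi\approx 0$''. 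So the framework matches; the issue is that your proposal stops at the point where the paper's work actually begins.

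What is missing is any mechanism for the uniform estimate $I_n(\lambda)\asymp(1+\|\lambda\|)^{-n+1}L_n(\lambda)$. Your heuristic local model $\int\prod_j(|\xi_j|+\delta_j)^{-1}\,d\xi$ does not match the true singularity structure: the pushforward density has square-root edge singularities (not simple poles), and the $n-2$ logarithms are not independent --- they all carry the \emph{same} scale $1+|\lambda_2|+|\lambda_{n-1}|$, which your product model would not predict. The paper's device is an induction on $n$: writing $k\in SO(n)$ as $SO(n-1)$ times a sphere and invoking the Fan--Pall interlacing theorem, one gets an explicit transfer formula
\[
I_n(\lambda)\ \asymp\ \int_{\mathcal M}I_{n-1}(\mu)\,J(\mu)\,d\mu,\qquad J(\mu)=\frac{\prod_{i<j}|\mu_i-\mu_j|}{\prod_{i,j}|\lambda_i-\mu_j|^{1/2}},
\]
over the interlacing polytope $\lambda_i<\mu_i<\lambda_{i+1}$. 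The logarithms then emerge from one-dimensional convolutions of the factors $F_i=\mathbf 1_{[\lambda_i,\lambda_{i+1}]}|\lambda_i-x|^{-1/2}|\lambda_{i+1}-x|^{-1/2}$, handled by monotone rearrangement; the case split (one dominant gap vs.\ two; which indices carry them) is what produces both the exponent $n-2$ and the $n=4$ extra factor. None of this is visible from the permutohedron heuristic, and the $c$-function poles you invoke do not actually enter --- the log factors are purely a feature of the orbit-projection geometry, not of the Plancherel density.
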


Since $\tilde{\beta}(\l) \ll \norm{\l}^{n(n-1)/2}$ we obtain $\abs{\P_i} \ll \norm{\nu_i}^{(n^2-3n+2)/4}L_n(\nu_i)^{1/2}$, an improvement on the bound for general manifolds. The factor $L_n(\l)$ can be given a geometric interpretation, namely, it grows when $\l$ is collinear with a root. In the $n=4$ case, it grows when $\l$ is collinear with a root or a sum of two orthogonal roots.

\subsection{Outline of the proof}

Using a standard pre-trace formula argument, we reduce the period estimate to an estimate for spherical functions: let $A \subset SL(n,\R)$ be the diagonal subgroup and $b \in C^\infty_c(A)$, then
\begin{equation}\label{eqn:spherical}
\int_A \varphi_\lambda(g) b(g) dg \ll_b 
(1+\norm{\lambda})^{-n+1} 
L_n(\l)
.
\end{equation}
We use a theorem of Duistermaat to linearize the problem, replacing the spherical function $\varphi_\lambda$ on the Lie group with a ``Euclidean'' approximation on the Lie algebra. 
The problem then reduces to the following random matrix lemma:
Let $\l \in \mf{a}^*$ be regular. For a real $n \times n$ matrix $A$, let $d(A) = \sum_{1 \leq i \leq n} A_{ii}^2$ be the size of the diagonal. Choose $k \in SO(n)$ at random according to the Haar measure, then 
\[
\textup{Prob}[ d(k \l k^{-1}) < 1] \asymp (1+\norm{\l})^{-n+1} L_n(\l).
\]
While we were unable to prove a lower bound in Theorem \ref{thm:main}, this lemma and Michels' lower bounds are at least suggestive that our upper bound is sharp. 

\section{Proof of theorem 1}

\subsection{Preliminaries and Notation}
The notation $A \ll B$ means there exists $C > 0$ such that $A \leq CB$, and $A \asymp B$ means $A \ll B \ll A$. The implied constant $C$ may always depend on the dimension $n$ and bump function $b$; any other dependency is indicated with a subscript. When $A$ is a matrix, $\norm{A}$ will denote the Frobenius norm $\norm{A}^2 = \sum_{ij} \abs{A_{ij}}^2$. Indicator functions are denoted $\1_S(x)$ where $S$ is a set or $\1[P(x)]$ where $P$ is a predicate. 

Let $G = SL(n,\R)$. We write the Iwasawa decomposition $G = NAK$, where $N$ is the upper triangular subgroup with 1's on the diagonal, $A$ is the diagonal subgroup and $K$ is $SO(n)$. We have the corresponding Lie algebra decomposition $\mf{g = n + a + k}$ and the Cartan decomposition $\mf{g = p + k}$. Then $X = \Gamma \backslash G / K$ for some cocompact lattice $\Gamma \subset G$, and $Y$ lies in the image of $gA$ for some $g \in G$. 
Using a partition of unity we may assume the support of $b$ is small, say, having diameter $< R/100$ where $R$ is the injectivity radius of $X$, and lift $b$ to $C^\infty_c(gA)$.

\subsection{Spherical Functions and the Trace Formula}\label{sec:traceformula}

Let $H : G \to \mf{a}$ be the smooth map satisfying $g \in N e^{H(g)} K$ for all $g \in G$ (sometimes called the \textit{Iwasawa projection} \cite{bart}). Let $\rho$ be the half sum of the positive roots and $W$ be the Weyl group. Recall the spherical functions
\[
\varphi_\lambda(g) = \int_K e^{(i\lambda + \rho)H(kg)} dk
,
\]
the Harish-Chandra transform 
\[
\widehat{f}(\lambda) = \int_G f(x) \varphi_{-\lambda}(x) dx \,\,\,\,\,\,\,\,\, f \in C^\infty_c(K \backslash G/K)
,
\]
and its inverse
\[
f(x) = 
\frac{1}{\a{W}}
\int_\mf{a^*} \varphi_\lambda(x) \widehat{f}(\lambda) \beta(\lambda) d\lambda
.\]
In order to prove Theorem \ref{thm:main} we use the pre-trace formula for $SL(n,\R)$. Let $k \in C^\infty_c(K\backslash G / K)$ be a bi-$K$-invariant test function and  
\[
K(x,y) = \sum_{\gamma \in \Gamma} k(x^{-1} \gamma y)
\]
be the corresponding automorphic kernel on $X$. Then the pre-trace formula \cite{pretrace} states 
\[
\sum_{\gamma \in \Gamma} k(x^{-1} \gamma y) = \sum_i \widehat{k}(- \nu_i) f_i(x) \overline{f_i(y)}
\]
where $\widehat{k}$ is the Harish-Chandra transform of $k$. 
Integrating against $b(x)b(y) \in C_c^\infty(gA \times gA)$ we obtain
\begin{equation}\label{eq:pretrace}
  \int_{gA} \int_{gA} \sum_{\gamma \in \Gamma} 
  k(x^{-1} \gamma y)
   b(x) b(y)
  dx dy = \sum_i \widehat{k}(- \nu_i) \abs{\P_i}^2  
.\end{equation}
 
 Next we construct a $k$ such that $\widehat{k}$ concentrates around a given $\l_0$. Recall that the spectral parameters of Maass forms satisfy $W \nu_i = W \overline{\nu_i}$ and $\norm{\operatorname{Im} \nu_i} \leq \norm{\rho}$. Define $\Omega = \{ \l \in \mf{a}^*_\C : W \l = W \overline{\l}, \norm{\operatorname{Im} \l} \leq \norm{\rho} \}$ \cite[Lemma 4.1]{marshallPaley}. Using the Paley-Wiener theorem for the Harish-Chandra transform we construct $k$ with the following properties:
\begin{enumerate}
    \item $k$ is supported in a ball of radius $R/100$, where $R$ is the injectivity radius of $X$. 
    \item $\widehat{k}(\l) \geq 0$ for $\l \in \Omega$.
    \item $\widehat{k}(\l) \geq 1$ for $\l \in \Omega$ with $\norm{\text{Re } \l - \l_0} \leq 1$. 
    \item $\widehat{k}(\l) \ll_N (1+\norm{\l-
    \l_0})^{-N}$ for $\nu \in \Omega$ uniformly in $\l_0$. 
\end{enumerate}
The details of this construction may be found in \cite[Section 4.1]{marshallPaley}. 

By property (1),  all terms except $\gamma = e$ on the geometric side drop out. By properties (2) and (3) we have 
\[
\int_{gA} \int_{gA} k(x^{-1} y) b(x)b(y) dx dy
\geq 
\sum_{\norm{\Re \nu_i - \l_0} \leq 1} \abs{\P_i}^2
.\]
Note that $x^{-1} y \in A$. 
Defining $b_1(z) = \int_{gA} b(x) b(xz) dx \in C^\infty_c(A)$ and changing variables $z = x^{-1} y$ the left hand side becomes
\begin{align*}
\int_{gA} \int_{gA} k(x^{-1} y) b(x)b(y) dx dy
&=
\int_A k(z) b_1(z) \, dz 
\\
&= 
\int_A 
\left(
\f{1}{\a{W}}
\int_{\mf{a^*}}
\varphi_{\l}(z) \widehat{k}(\l) \beta(\l) 
\, d\mu
\right)
b_1(z)
dz 
\\
&=
\f{1}{\a{W}}
\int_{\mf{a^*}}
\widehat{k}(\l) \beta(\l)
\left(
\int_A 
\varphi_{\l}(z)
b_1(z) dz 
\right)
\, d\l
.
\end{align*}
The task is now to bound the integral of a spherical function $\varphi_\lambda$ against a smooth cutoff function on $A$. 
In other words, we need the following bound: for $b \in C^\infty_c(A)$, 
\[
\int_A \varphi_\l(z) b(z) dz \ll_b 
(1+\norm{\l})^{-n+1} 
L_n(\l)
.
\]
With this bound, Theorem \ref{thm:main} follows from the rapid decay of $\widehat{k}$ away from $-\l_0$ (property 4) and the polynomial growth of $\beta$. 

\subsection{Euclidean Approximation of Spherical Functions}
Let $\mf{g = k + p}$ be the
Cartan decomposition. For $X \in \mf{p}, k \in K$ write $k.X = k X k^{-1}$ for the adjoint action of $K$ on $\mf{p}$. 
Let $\pi : \mf{p} \to \mf{a}$ be the orthogonal projection with respect to the Killing form. By a theorem of Duistermaat \cite[Equation 1.11]{duistermaat}, there exists a nonnegative analytic function $a \in C^\infty(\mf{p})$ such that 
\[
\varphi_\lambda(e^X) = \int_K e^{
\langle i \lambda, \pi(k.X) \rangle} a(k.X) dk 
\]
where $dk$ is the Haar measure.
Changing variables $z = e^X$ and using Duistermaat's formula gives
\begin{align*}
(*) := 
\int_A \varphi_\lambda(z) b(z) dz
&=
\int_\mf{a} \int_K e^{\langle i\lambda, \pi(k.X)\rangle} 
a(k.X) b(e^X) 
dX 
dk 
\\
&=
\int_K
\left(
\int_\mf{a}
e^{\langle ik^{-1}.\lambda, X \rangle} 
a(k.X) b(e^X) 
dX 
\right)
dk 
,
\end{align*}
where in the second line we extend $\l \in \mf{a}^*$ to $\mf{p}^*$ by pulling back along $\pi$, and replace the adjoint action with the coadjoint action. 
Let $c(k, X) := a(k.X) b(e^X)$ and note that $c \in C^\infty_c(K \times \mf{a})$. Let $\widehat{c}(k, \xi) \in C^\infty(K \times \mf{a}^*)$ be the Fourier transform of $c$ in the second variable. Then the inner integral is just $\widehat{c}$ evaluated at the frequency $\xi = -\pi^*(k^{-1}.\l)$, where $\pi^* : \mf{p}^* \to \mf{a}^*$ is restriction: 
\[
(*) =
\int_K \widehat{c}(k, - \pi^*( k^{-1}.\lambda) ) dk
.\]
Since $\widehat{c}(k, \xi)$ has rapid decay in $\xi$ for all $k$ and $K$ is compact, $\sup_k \widehat{c}(k, \xi)$ also has rapid decay in $\xi$. Let $B$ be the indicator function of the unit ball on $\mf{a^*}$, pulled back along $\pi^*$ to $\mf{p^*}$. Taking the supremum in the first variable and upper bounding in the second variable by balls gives
\[
\abs{(*)} \leq \sum_{r=1}^\infty d_r \int_K B(r^{-1}(k^{-1}. \lambda)) dk 
\]
where the sequence $d_r$ has rapid decay. Essentially, we want to know how much the coadjoint orbit $K.\lambda$ can concentrate near $(\mf{a^*})^\perp$. 
It suffices to show the following lemma: 

\begin{lem}\label{lem:main}
    Let $\Sym_n(\R)$ be the set of real symmetric $n \times n$ matrices. For $X \in \Sym_n(\R)$ let $\pi(X)$ be the diagonal part of $X$ and $B(X) = \1[{ \norm{\pi(X)}<1}]$. Let $\l = \text{diag}(\l_1,\dots,\l_n)$ with $\l_1 < \l_2 < \dots < \l_n$. Define 
    \[
    I_n(\l) 
    :=
    \int_{SO(n)}
    B(k.\l)
    \, dk 
    \]
    and 
    \[
    \answer_n(\l) := (1+\norm{\l})^{-n+1} L_n(\l)
    .
    \]
    Then if $\Tr \l = 0$ we have $I_n(\l) \asymp A_n(\l)$.
\end{lem}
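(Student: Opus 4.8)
\emph{Proof strategy.}
Write $k.\l=k\l k^{-1}$; its diagonal is $\pi(k.\l)=(\langle v^{(i)},\l v^{(i)}\rangle)_i$, where $v^{(1)},\dots,v^{(n)}$ are the rows of the Haar-random $k$, so $\pi(k.\l)=P\l$ for the doubly stochastic matrix $P_{ij}=k_{ij}^2$; in particular $\pi(k.\l)$ always lies in the hyperplane $\{\Tr=0\}\cong\R^{n-1}$, over which it sweeps out the permutohedron of $\l$ (Schur--Horn). If $\norm{\l}\ll1$ then $I_n(\l)\asymp1\asymp\answer_n(\l)$, so assume $\norm{\l}$ large. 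The plan is to prove, by induction on $n$, the two-sided bound $I_n^{(r)}(\l):=\mathrm{Prob}[\norm{\pi(k.\l)}<r]\asymp\answer_n(\l)$ uniformly for $r$ in a fixed range, say $1/100\le r\le100$, and all trace-free $\l$; working at every scale $r\asymp1$ at once is what keeps the recursion self-contained.

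For the inductive step, the idea is to peel off the last row $v:=v^{(n)}$, uniform on $S^{n-1}$, and set $t:=\langle v,\l v\rangle=\sum_j\l_j v_j^2$, so that $(v_1^2,\dots,v_n^2)$ is $\mathrm{Dirichlet}(\tfrac12,\dots,\tfrac12)$. Conditionally on $v$ the remaining rows form a Haar frame of $v^\perp$, and in that frame the top-left $(n-1)\times(n-1)$ block of $k.\l$ is conjugate, via a Haar element of $SO(n-1)$, to $\mathrm{diag}(\mu(v))$, where $\mu(v)$ is the spectrum of the compression $P_{v^\perp}\l P_{v^\perp}$; it interlaces $\l$ and has $\Tr\mu(v)=-t$. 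Hence
\[
\norm{\pi(k.\l)}^2 \;=\; t^2 + \norm{\pi(k'.\mu(v))}^2,\qquad k'\in SO(n-1)\ \text{Haar}.
\]
Translating $\mu(v)$ to be trace-free moves it, and $\pi(k'.\mu(v))$, only by $O(1)$, which changes the relevant radius and the value of $\answer_{n-1}$ by at most bounded factors; so the inductive hypothesis reduces the inner probability to $\answer_{n-1}$ evaluated at the trace-free translate $\nu(v)$ of $\mu(v)$. With the base case $n=2$ — where $\l=\mathrm{diag}(-a,a)$ and a one-variable computation gives $I_2^{(r)}(\l)\asymp a^{-1}=\answer_2(\l)$ — everything reduces to estimating, at each peeling step and for every interval $I\subset\R$ of length $\asymp1$, the average $\mathbb{E}_v\big[\,\1[t\in I]\,\answer_{n-1}(\nu(v))\big]$. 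The two analytic inputs are therefore: (i) the distribution of the weighted sum $t=\sum_j\l_j v_j^2$ near any point of its range; and (ii) the conditional law of the compressed spectrum $\mu(v)$ given $t\in I$.

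The logarithms in $L_n$ come from (i). When the $\l_j$ split into two clusters near $\pm\norm{\l}/\sqrt2$ — equivalently $1+|\l_2|+|\l_{n-1}|\ll\norm{\l}$, i.e.\ $\l$ is nearly collinear with the root $e_n-e_1$ — the two extreme weights $v_1^2,v_n^2\sim\mathrm{Beta}(\tfrac12,\tfrac{n-1}{2})$ each carry an $s^{-1/2}$ density singularity at the origin, and $\mathrm{Prob}[|t|<1]$ acquires a factor $\asymp\log'\tfrac{\norm{\l}}{1+|\l_2|+|\l_{n-1}|}$ over its baseline $\asymp1/\norm{\l}$. Interlacing, together with the typical size of $v$, keeps $\mu(v)$ nearly collinear with a longest root whenever $\l$ was, so the same factor re-enters at each of the $n-2$ peeling steps, giving $L_n(\l)$ for $n\ne4$. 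The case $n=4$ is genuinely exceptional: for $\l\approx(-a,-a,a,a)$ the matrix $\l$ has rank $2$ with each eigenvalue of multiplicity $2$, so its codimension-one compression has $\mu_1=-a$ and $\mu_3=a$ pinned by interlacing and only the middle eigenvalue free, equal to $-t$; conditioning on $|t|<1$ thus places $\mu(v)$ essentially at $(-a,0,a)$, which is exactly the rank-$3$ log-producing configuration. So the first peel contributes no logarithm of type (i) but hands one down to the next level, and accounting for this produces the extra factor $\log'\tfrac{\norm{\l}}{1+|\l_1-\l_2|+|\l_3-\l_4|}$, which measures collinearity of $\l$ with the sum $(e_3-e_1)+(e_4-e_2)$ of two orthogonal roots.

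The main obstacle is the sharp two-sided control demanded in (i) and (ii), \emph{uniformly over all degeneration patterns of $\l$}: one must pin down, up to constants, the density at the origin (and at nearby points) of an arbitrary nonnegatively weighted $\mathrm{Dirichlet}(\tfrac12,\dots,\tfrac12)$ combination, together with the induced law of $\mu(v)$, simultaneously for every pattern of near-coincidences among the $\l_j$ and every distance of $\l$ from the root hyperplanes, and verify that these never conspire to produce a logarithm beyond those already present in $L_n$. A secondary but unavoidable difficulty is that $n=4$ must be treated by hand, since the clean inductive pattern genuinely breaks there.
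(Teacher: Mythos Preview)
Your strategy is the paper's strategy: induct on $n$ by peeling off the last row, so that the upper-left $(n-1)\times(n-1)$ block of $k.\lambda$ has spectrum $\mu$ interlacing $\lambda$, and $\|\pi(k.\lambda)\|^2=(k.\lambda)_{nn}^2+\|\pi'(k'.\mu)\|^2$ with $k'$ Haar on $SO(n-1)$. The only cosmetic difference is parametrization: you condition on the unit row $v$ and invoke the $\mathrm{Dirichlet}(\tfrac12,\dots,\tfrac12)$ law of $(v_j^2)$, whereas the paper changes variables directly to the interlacing eigenvalues $\mu\in\mathcal{M}=\prod_i(\lambda_i,\lambda_{i+1})$ via a Fan--Pall computation, obtaining the explicit Jacobian $J(\mu)=\prod_{i<j}|\mu_i-\mu_j|\big/\prod_{i,j}|\lambda_i-\mu_j|^{1/2}$. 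These are the same integral; your $\mathrm{Beta}(\tfrac12,\cdot)$ singularities are exactly the factors $|\lambda_i-\mu_j|^{-1/2}$ in $J(\mu)$.

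Where your outline stops is where the work begins. You correctly name the obstacle---uniform two-sided control of the resulting density across all degeneration patterns of $\lambda$---but do not attack it, and that analysis is the substance of the paper's proof. The paper rewrites the inductive integral as $\int_{\mathcal{H}}(1+\|\mu\|)^{-n+2}L_{n-1}(\mu)\prod_{i<j}\kappa_{i,j}\prod_i F_i(\mu_i)\,d\mu$ with $F_i(x)=\mathbf{1}_{[\lambda_i,\lambda_{i+1}]}|\lambda_i-x|^{-1/2}|\lambda_{i+1}-x|^{-1/2}$, then splits by the gap structure of $\lambda$ (one dominant gap $d_I$ versus two large gaps $d_I,d_J$; which indices $I,J$). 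In each case it reduces to convolutions built from the $F_i$ and controls them by the Hardy--Littlewood rearrangement inequality together with several explicit one- and two-variable integral lemmas. The logarithm appears exactly when $(I,J)=(1,n-1)$, matching your root-collinearity heuristic, and the $n=4$ anomaly is the single-dominant-gap case $(n,I)=(4,2)$, again as you diagnosed. So your proposal is a faithful high-level summary of the paper's argument, minus the several pages of case analysis that actually establish the bound; in particular, the verification that no spurious logarithm appears when $(I,J)\neq(1,n-1)$---your ``never conspire'' clause---is not automatic and consumes most of the effort.
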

Finally, since $(*)$ is continuous in $\l$, we may perturb $\l$ to be regular (i.e. $\l_i < \l_{i+1}$ rather than $\leq)$, then 
\begin{align*}
\abs{(*)} 
&\leq \sum_{r=1}^\infty d_r (1+r^{-1}\norm{\lambda})^{-n+1} L_n(r^{-1}\l) 
\\
&\ll (1+\norm{\lambda})^{-n+1} L_n(\l)
.
\end{align*}
\section{The coadjoint orbit $K.\l$}

The remainder of the paper will be proving Lemma \ref{lem:main}.

\subsection{Preliminaries}

We note some properties of $I_n(\l)$ and $\answer_n(\l)$.
\begin{itemize}
    \item 
    $I_n(\l)$ is monotone under scaling: if $t \geq 1$ then $I_n(t\l) \leq I_n(\l)$.
    \item 
    For a fixed $C \geq 0$, we have $\answer_n(C \l) \asymp_C \answer_n(\l)$, and $\norm{\l - \l'} \leq C$ implies $\answer_n(\l) \asymp_C \answer_n(\l')$. 
    \item 
    An estimate for $I_n(\l)$ when $\Tr \l \neq 0$ easily follows from the tracefree case. Write $\l = \l_0 + (\Tr \l / n)I$. Then 
\begin{align*}
    \norm{\pi(k.\l)}^2 
    &= 
    \norm{\pi(k.\l_0)}^2
    + 
    2\langle 
    \pi(k.\l_0)
    ,
    (\Tr \l / n)I
    \rangle
    +
    \norm{\pi((\Tr \l / n)I)}^2
    \\
    &=
    \norm{\pi(k.\l_0)}^2
    +
    (\Tr \l)^2/n
    ,
\end{align*}
so $\norm{\pi(k.\l)} < 1$ if and only if $\norm{\pi(k.\l_0)} < \sqrt{1 - (\Tr \l)^2/n}$. Thus 
\begin{align}\label{eq:nonzerotrace}
I_n(\l) =
\begin{cases}
    I_n(\l_0 / \sqrt{1-(\Tr \l)^2/n}) & \a{\Tr \l} \leq \sqrt{n}
    \\
    0 & \a{\Tr \l} > \sqrt{n}
\end{cases}
.
\end{align}
\end{itemize}


We also note the following soft lower bound for $I_n(\l)$. 
\begin{lem}
     $I_n(\l) \gg (1+\norm{\l})^{-\dim SO(n)}$.
\end{lem}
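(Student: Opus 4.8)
The plan is to reduce to $\norm{\l}$ large, exhibit one point of the orbit at which the diagonal vanishes, and then inflate it to a ball using a Lipschitz bound. First I would dispose of the trivial range: if $\norm{\l} \le 1$ then $\norm{\pi(k.\l)} \le \norm{k.\l} = \norm{\l} \le 1$ for \emph{every} $k \in SO(n)$, so $I_n(\l) = 1$ and there is nothing to prove. Henceforth assume $\norm{\l} \ge 1$; as throughout this section we also take $\Tr\l = 0$.

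Next I would produce a single $k_0 \in SO(n)$ with $\pi(k_0.\l) = 0$. This is the assertion that a trace-zero real symmetric matrix is orthogonally conjugate to one with identically zero diagonal --- a special case of the Schur--Horn theorem (indeed $0$ is the centroid of the permutohedron of $\l$), but it also admits a quick inductive proof: the quadratic form $v \mapsto v^{T}\l v$ has mean $\tfrac1n\Tr\l = 0$ on the unit sphere, hence vanishes at some unit vector $v_0$; using $v_0$ as the first vector of an orthonormal basis, the matrix of $\l$ acquires a zero in position $(1,1)$ and its lower $(n-1)\times(n-1)$ block is again symmetric and trace-zero, so one recurses. Adjusting the sign of one basis vector makes the change of basis an element $k_0 \in SO(n)$, and then $\pi(k_0.\l)=0$.

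Then I would use a Lipschitz estimate for $\Phi(k) := \pi(k.\l)$ to obtain a whole ball of admissible $k$. Since conjugation by $SO(n)$ preserves the Frobenius norm on $\Sym_n(\R)$, the projection $\pi$ is norm-nonincreasing, and the differential of $k \mapsto k.\l$ along $Z \in \mf{so}(n)$ is (a left-translate of) $[Z,\l]$, which satisfies $\norm{[Z,\l]} \le 2\norm{\l}\norm{Z}$, the map $\Phi \colon SO(n) \to \mf{a}$ is Lipschitz with constant $\ll \norm{\l}$ for the bi-invariant metric. Hence there is a fixed $c>0$ with $\norm{\Phi(k)} < 1$, i.e. $B(k.\l) = 1$, for all $k$ in the geodesic ball $B(k_0, c\norm{\l}^{-1})$. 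Because $\norm{\l} \ge 1$ this ball has radius below a fixed threshold, so its Haar measure is $\asymp \norm{\l}^{-\dim SO(n)}$ by the standard estimate for volumes of small geodesic balls in the compact manifold $SO(n)$. Therefore
\[
I_n(\l) = \int_{SO(n)} B(k.\l)\,dk \;\ge\; \operatorname{Haar}\!\bigl(B(k_0, c\norm{\l}^{-1})\bigr) \;\gg\; \norm{\l}^{-\dim SO(n)} \;\asymp\; (1+\norm{\l})^{-\dim SO(n)} .
\]

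I do not expect a genuine obstacle here --- the lemma is ``soft'' precisely because each ingredient is standard. The only step that takes a moment is the existence of $k_0$, handled by Schur--Horn or the short induction above; the remainder is the Lipschitz bound and the elementary volume-of-small-balls estimate.
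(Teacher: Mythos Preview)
Your proposal is correct and follows essentially the same approach as the paper: find a single $k_0$ with $\pi(k_0.\l)=0$, then use a Lipschitz/Taylor bound for $k\mapsto \pi(k.\l)$ to inflate $k_0$ to a ball of radius $\asymp\norm{\l}^{-1}$ and read off the Haar volume. The only cosmetic difference is the existence step for $k_0$: the paper uses an intermediate-value argument with $SO(2)$-rotations to zero one diagonal entry at a time, whereas you invoke Schur--Horn or the mean-zero quadratic form on the sphere --- both are standard and equivalent in strength.
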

\begin{proof}
An easy inductive argument shows that for all $\l$ with $\Tr \l = 0$, there exists some $k_0 \in SO(n)$ such that $\pi(k_0.\l) = 0$. Indeed, suppose $\l_1 < 0 < \l_2$, and let $R_\theta \in SO(2)$ act on the first two coordinates. Then acting by a quarter-turn swaps $\l_1$ and $\l_2$: 
\begin{align*}
    R_{\pi/2}. \begin{bmatrix}
        \l_1 &  & \\
         & \l_2 & \\
         & & \ddots 
    \end{bmatrix}
    &= 
\begin{bmatrix}
        \l_2 &  & \\
         & \l_1 & \\
         & & \ddots 
    \end{bmatrix},
\end{align*}
so by continuity there exists $\theta$ such that $R_\theta.\l$ has a 0 in the upper left corner, etc. 

Choose $X \in \mf{so}(n)$ with $\norm{X} < 1/100$. Taylor expansion gives 
\begin{align*}
    (\exp X)k_0.\l 
    &= 
    k_0.\l + X (k_0.\l) - (k_0.\l)X + \frac{1}{2} \left( 
    X^2(k_0.\l) -
    2X(k_0.\l)X +
    (k_0.\l)X^2
    \right) + \dots 
\end{align*}
Thus $\norm{\pi((\exp X) k_0 . \l)} \ll \norm{X}\norm{\l}$, and $\norm{\pi(k.\l)} < 1$ on a ball of radius $\gg \min(1/100,\norm{\l}^{-1})$ around $k_0$.
\end{proof}

Since $ (1+\norm{\l})^{-\dim SO(n)} \ll I_n(\l) \leq 1$, in the rest of the proof we may assume that $\norm{\l}$ is greater than some constant depending only on $n$ and prove $I_n(\l) \asymp \norm{\l}^{-n+1} L_n(\l)$ instead of $I_n(\l) \asymp (1+\norm{\l})^{-n+1} L_n(\l)$.



\subsection{Inductive step}

For $n=2$, we write $\l = \begin{bmatrix}
    -a & 0 \\ 0 & a 
\end{bmatrix}, k = \begin{bmatrix}
    \cos \theta & \sin \theta \\ -\sin \theta & \cos \theta 
\end{bmatrix}$ and easily evaluate the integral:
\begin{align*}
    \int_{SO(2)} B(k.\l) dk 
    &= \frac{1}{2\pi} \int_0^{2\pi} 
    B 
    \left( 
    \left[
    \begin{matrix}
        -a\cos 2\theta & a\sin 2 \theta
        \\ 
        a\sin 2 \theta & a\cos 2\theta 
    \end{matrix}
    \right]
    \right) 
    d\theta 
    \\
    &= 
    \frac{1}{2\pi}
    \int_0^{2\pi} 
    \bold{1}[2 a^2 \cos^2 (2\theta) < 1] d\theta 
    \\
    &\asymp (1 + \abs{a})^{-1}
    .
\end{align*}
So in the remainder of the proof assume $n \geq 3$.

Let $e_1, \dots, e_n$ be basis vectors for $\R^n$ and let $SO(n-1) \subset SO(n)$ be the subgroup fixing $e_n$. Let $\Pi : \Sym_n(\R) \to \Sym_{n-1}(\R)$ be the projection to the upper left $n-1\times n-1$ submatrix; we have $\Pi(k'.X) = k'.\Pi(X)$ for $k' \in SO(n-1)$. Let $\pi'$ and $B'$ be the $(n-1)$-dimensional versions of $\pi$ and $B$. Let $X = k.\l$. We have $\norm{\pi(X)}^2 = \norm{\pi'(\Pi(X))}^2 + X_{nn}^2$, so $B(X) \leq B'(\Pi(X))$ and 
\begin{align}\label{eq:indupper}
    I_n(\l) \leq \int_{SO(n)} B'(\Pi(k.\l)) \, dk. 
\end{align}
On the other hand, since $\Tr X = 0$ we have $X_{nn} = - \sum_{i=1}^{n-1} X_{ii}$ and by Cauchy-Schwarz $X_{nn}^2 \leq (n-1) \sum_{i=1}^{n-1} X_{ii}^2 = (n-1) \norm{\pi'(\Pi(X))}^2$, so $\norm{\pi(X)} \leq \sqrt{n} \norm{\pi'(\Pi(X))}$ and 
\begin{align*}
    I_n(\l) \geq \int_{SO(n)} B'(\sqrt{n}\Pi( k.\l)) \, dk. 
\end{align*}
To do the induction we will convert the integral over $SO(n)$ to an integral over $SO(n-1)$: 
\begin{lem}\label{lem:transfer}
    Define 
    \begin{align*}
        \mc{M} := \left\{ 
        \left[
        \begin{matrix}
            \mu_1 & & \\
             & \ddots & \\
             & & \mu_{n-1} 
        \end{matrix}
        \right]
        : \l_i < \mu_i < \l_{i+1} \right\}
    \end{align*}
    and 
    \begin{align*}
        J(\mu) := \frac{
        \prod_{1 \leq i < j \leq n-1}
        \a{\mu_i-\mu_j}
        }{
        \prod_{\substack{1 \leq i \leq n \\ 1 \leq j \leq n-1}} \a{\l_i - \mu_j}^{1/2}
        } 
        . 
    \end{align*}
    For any non-negative $f : \Sym_{n-1}(\R) \to \R$, we have
    \begin{align}\label{eq:transfer}
        \int_{SO(n)} f(\Pi(k.\l)) \, dk 
        = 
        c_n
        \int_\mc{M} 
        \int_{SO(n-1)} f(k'.\mu) \, dk' 
        J(\mu) \, d\mu 
        . 
    \end{align}
    for some absolute constant $c_n> 0$. 
\end{lem}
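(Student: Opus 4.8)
The plan is to prove Lemma \ref{lem:transfer} by identifying the pushforward measure of the Haar measure on $SO(n)$ under the map $k \mapsto \Pi(k.\l)$, up to simultaneous conjugation by $SO(n-1)$. Concretely, the map factors as $k \mapsto k.\l \mapsto \Pi(k.\l)$; the first map has image the isospectral orbit $\mc{O}_\l = \{X \in \Sym_n(\R) : \text{spec}(X) = \{\l_1,\dots,\l_n\}\}$ with its unique $SO(n)$-invariant probability measure, and we must understand the distribution of the top-left $(n-1)\times(n-1)$ block of a random $X \in \mc{O}_\l$. Since $f$ is evaluated only through $\Pi$ and we will re-introduce an $SO(n-1)$ integral on the right side, all that matters is the joint distribution of the \emph{eigenvalues} $\mu = (\mu_1,\dots,\mu_{n-1})$ of $\Pi(X)$; the eigenvectors are then Haar-random in $SO(n-1)$ by $SO(n-1)$-equivariance ($\Pi(k'.X) = k'.\Pi(X)$) and the fact that the full orbit measure is $SO(n)$-invariant hence $SO(n-1)$-invariant. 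So the content reduces to: the eigenvalues of the principal $(n-1)\times(n-1)$ minor of a uniformly random element of $\mc{O}_\l$ have density proportional to $J(\mu)$ on the interlacing region $\mc{M}$.

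The key step, therefore, is to invoke (or re-derive) the classical formula for the eigenvalue density of a principal subminor of a random symmetric matrix with fixed spectrum — this is exactly the \emph{Gelfand--Tsetlin} / minor-interlacing density from random matrix theory (the orthogonal/$\beta=1$ analogue of the result for GUE minors, going back to Baryshnikov and to Neretin, Olshanski--Vershik, and Faraut). The interlacing constraint $\l_i < \mu_i < \l_{i+1}$ is Cauchy's interlacing theorem, and the density is, up to a normalizing constant $c_n$ depending only on $n$ (and on $\l$ through its Vandermonde, which is harmless since $\l$ is fixed throughout — though strictly I should absorb the $\l$-dependent normalization; in fact the stated $c_n$ is genuinely independent of $\l$ because the Vandermonde of $\l$ appears in the normalization of the orbit measure and cancels the analogous factor — this is a point I would double-check, and if needed state $c_n = c_n(\l)$ or include a $\prod_{i<j}|\l_i-\l_j|^{-1}$ factor, which does not affect the later application since $\l$ is fixed), equal to
\[
\frac{\prod_{1\le i<j\le n-1}|\mu_i-\mu_j|}{\prod_{i,j}|\l_i-\mu_j|^{1/2}}.
\]
The numerator is the $\beta=1$ Vandermonde of the smaller matrix (Jacobian of diagonalization on $\Sym_{n-1}$), and the denominator with exponent $1/2 = \beta/2$ comes from the Jacobian of the map $\mc{O}_\l \to \{\text{blocks}\}$; I would present this by writing $X$ in block form $\begin{bmatrix} Y & v \\ v^\top & x \end{bmatrix}$ with $Y \in \Sym_{n-1}$, $v \in \R^{n-1}$, $x \in \R$, noting that for $X$ to have spectrum $\l$ the triple $(Y,v,x)$ is constrained, and changing variables from the unit vector / Stiefel coordinates parametrizing $\mc{O}_\l$ to $(\mu, \text{eigenframe of } Y, \text{interlacing data})$.

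The main obstacle is getting the Jacobian computation exactly right and making sure the normalizing constant really is absolute (independent of $\l$). The cleanest route is probably not to redo the Jacobian from scratch but to cite the Gelfand--Tsetlin density theorem for the $O(n)$-ensemble directly — if a clean reference is available, the proof is two lines: (1) push forward the invariant measure on $\mc{O}_\l$ along $X \mapsto$ (eigenvalues of $\Pi X$, eigenframe of $\Pi X$), using $SO(n-1)$-equivariance to see the eigenframe is independent and Haar-distributed; (2) the eigenvalue marginal is the known interlacing density $\propto J(\mu)\,d\mu$. I would then conclude by substituting $f(k'.\mu)$ and integrating, pulling the $c_n$ out front. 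If no sufficiently explicit reference exists, I fall back to the block-decomposition computation above, where the delicate points are handling the measure-zero set where $\mu$ is non-regular (negligible) and tracking how the $1/2$-powers arise from integrating out the off-diagonal vector $v$ subject to the spectral constraint; I expect this to be a standard but somewhat lengthy determinant identity (essentially the secular-equation identity $\det(\l I - X) = \det(\l I - Y)\bigl(x - \l - v^\top(Y-\l I)^{-1} v\bigr)$ evaluated at the $\l_i$ and $\mu_j$).
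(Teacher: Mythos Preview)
Your approach is correct and is essentially what the paper does: push forward Haar measure on $SO(n)$ under $k \mapsto \Pi(k.\l)$, use $SO(n-1)$-equivariance to split off a Haar-distributed eigenframe, and identify the eigenvalue marginal on the interlacing polytope $\mc{M}$ as $J(\mu)\,d\mu$. The paper differs only in execution: rather than citing the Gelfand--Tsetlin/minor-interlacing density, it carries out the Jacobian computation by hand. It invokes the Fan--Pall theorem (exactly your secular-equation identity, giving $z_i^2 = \prod_j|\l_j-\mu_i|/\prod_{j\neq i}|\mu_i-\mu_j|$ for the off-diagonal block) to parametrize the fibers, then computes the differential of $F(k)=\Pi(k.\l)$ in explicit bases $\{\mathbf{E}_{ij}k\}$ for $T_kSO(n)$ and $\{\mathbf{F}_{ij}\}$ for $\Sym_{n-1}$, obtaining $\det D_kF = \pm 2^{n-1}\prod_{i,j}|\l_i-\mu_j|^{1/2}$; combined with the Weyl-integration Jacobian $\prod_{i<j}|\mu_i-\mu_j|$ for $SO(n-1)\times\mc{M}\to\mc{A}$, this gives $J(\mu)$ directly.

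On your uncertainty about $c_n$: it really is independent of $\l$, and the paper's computation makes this transparent. No Vandermonde of $\l$ ever appears, because you are pushing forward the \emph{normalized Haar} measure on $SO(n)$ rather than restricting Lebesgue measure on $\Sym_n(\R)$ to the orbit; the Jacobian of $F$ involves only the factors $|\l_i-\mu_j|$, and the remaining constants (the $2^{2n-2}$-fold covering degree and the Haar normalizations) depend only on $n$. Equivalently, taking $f\equiv 1$ in the identity shows $\int_{\mc{M}} J(\mu)\,d\mu = c_n^{-1}$ is an absolute constant --- a good sanity check for whichever route you take.
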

\begin{proof}
    Let $\mc{A} \subset \Sym_{n-1}(\R) $ be the set of matrices conjugate to some $\mu \in \mc{M}$, 
    equipped with the measure inherited from $\R^{(n-1)^2}$. We will show both sides are proportional to 
    \begin{align}\label{eq:middle}
        \int_{\mc{A}} f(X) 
        \prod_{\substack{1 \leq i \leq n \\ 1 \leq j \leq n - 1}} \a{\l_i - \mu_j(X)}^{-1/2}
        \, dX
    \end{align}
    where $\mu_j(X)$ is the $j$-th highest eigenvalue of $X$, viewed as a function on $\mc{A}$. Define a map 
    \begin{align*}
        F &: SO(n) \to \Sym_{n-1}(\R) 
        \\ 
        F(k) &= \Pi(k.\l) 
        .
    \end{align*}
    We begin by pushing the left hand side of (\ref{eq:transfer}) forward along $F$ to get (\ref{eq:middle}). First we exclude the degenerate case where $\Pi(k.\l)$ shares an eigenvalue with $\l$. 
    \begin{lem}\label{lem:quadrants}
    $\l_i$ is an eigenvalue of $\Pi(k.\l)$ if and only if $\langle e_i, k^{-1} e_n \rangle = 0$. 
\end{lem}
\begin{proof}
    By replacing $\l$ with $\l - \l_i I$, we may assume without loss of generality that $\l_i = 0$. Let $P = I - e_n e_n^T$, the projection killing the $n$-th coordinate. Then for any $A \in \Sym_n(\R)$,
    \begin{align*}
    P A P = 
    \begin{bmatrix}
    \Pi(A) & 0 
    \\ 
    0 & 0 
    \end{bmatrix}, 
    \end{align*}
    so the eigenvalues of $PAP$ are the eigenvalues of $\Pi(A)$, with an extra multiplicity at $0$. 
    
    Suppose $\l_i$ is an eigenvalue of $\Pi(k.\l)$, then $Pk\l k^{-1}Pv = 0$ for some $v$ with $Pv \neq 0$. If $\l k^{-1} P v = 0$, then $k^{-1} P v = c e_i$ for some $c \neq 0$, and  $\langle  e_i, k^{-1} e_n \rangle = c^{-1} \langle k^{-1} P v , k^{-1} e_n \rangle = c^{-1} \langle Pv, e_n \rangle = c^{-1} \langle v, P e_n \rangle = 0$. Otherwise, if $P k \l k^{-1} P v = 0$ and $\l k^{-1} P v \neq 0$ then $k \l k^{-1} P v = c e_n$ for some $c \neq 0$ and $\langle e_i, k^{-1} e_n \rangle = c^{-1} \langle e_i, \l k^{-1} P v \rangle = c^{-1} \langle \l e_i, k^{-1} P v \rangle = 0$. 
    
    Conversely, if $\langle e_i, k^{-1} e_n \rangle = 0$ then $P k e_i = k e_i$ and $Pk\l k^{-1} P k e_i = Pk\l  e_i = 0$, so $k e_i$ is the desired eigenvector for $\Pi(k.\l)$. 
    \end{proof}
    
    Let $\mc{S} = \{ k \in SO(n) : \langle e_i , k^{-1} e_n \rangle = 0 \text{ for some $i$ } \}$. Note that the quotient $SO(n-1) \backslash \mc{S}$ is the intersection of the sphere $S^n \simeq SO(n-1) \backslash SO(n)$ with the coordinate planes, so $\mc{S}$ is negligible. 
    A calculation implicit in Fan and Pall \cite[Theorem 1 and pp.300-301]{fanpall} shows that $F(SO(n) - \mc{S}) = \mc{A}$. 
    \begin{lem}[Fan-Pall]
        Let $\mu_1 \leq \dots \leq \mu_{n-1}$ and $z_1, \dots, z_n \in \R$. Suppose the matrix 
        \begin{equation}\label{eq:fanpallmatrix}
    \left[
        \begin{matrix}
            \mu_1 &        &             & z_1 \\ 
                    & \ddots &             & \vdots \\ 
                    &        & \mu_{n-1} & z_{n-1} \\ 
              z_1   &  \dots & z_{n-1} & z_n \\ 
        \end{matrix}
        \right]
    \end{equation}
    has eigenvalues $\l_1 \leq \dots \leq \l_n$. Then $\l_1 \leq \mu_1 \leq \l_2 \leq \dots \leq \mu_{n-1} \leq \l_n$.
    Furthermore, if $\l_i \neq \mu_j$ for all $i,j$ then
    \[
    z_i^2 = \frac{
    \prod_{1 \leq j \leq n}
    \abs{\l_j - \mu_i}
    }{
    \prod_{\substack{
    1 \leq j \leq n-1
    \\
    i \neq j
    }}
    \abs{\mu_i - \mu_j}
    } 
    \]
    for $1 \leq i \leq n-1$, and taking traces gives $z_n = \sum_i \l_i - \sum_j \mu_j$. 
    
    Conversely, for any $\mu$ and $\l$ satisfying $\l_i \neq \mu_j$ the above choice of $z_i$ gives a matrix with eigenvalues $\l$. 
    \end{lem}
    
    We check that $F$ restricted to $SO(n) - \mc{S}$ is a smooth covering and compute its differential. Let $Y \in \mc{A}$. Conjugating by $SO(n-1)$ we may assume $Y$ is diagonal. There are $2^{n-1}$ choices of $X \in \Sym_n(\R)$ conjugate to $\l$ such that $\Pi(X) = Y$, corresponding to choices of sign for the $z_i$ in (\ref{eq:fanpallmatrix}). Furthermore, since the centralizer of $\l$ in $SO(n)$ consists of reflections through an even number of coordinate axes, for each $X$ there are $2^{n-1}$ choices of $k \in SO(n) - \mc{S}$ such that $k.\l = X$. 
    

    Let $\bold{E_{ij}}$ be the $n \times n$ matrix with a $1$ at position $(i,j)$, a $-1$ at $(j,i)$ and $0$ elsewhere. Then $\{ \bold{E_{ij}}k: 1 \leq i < j \leq n\}$ is an orthonormal basis for $T_k SO(n)$ (up to some constant). 
    Let $\bold{F_{ij}}$ be the $n-1 \times n-1$ matrix with a $1$ at $(i,j)$ and $(j,i)$ (or a single $1$ if $i=j$). Then $\{\bold{F_{ij}} : 1 \leq i \leq j \leq n-1\}$ is a orthonormal basis for $T_{F(k)} \mc{A}$.

    We calculate
    \begin{align*}
        D_k F(\bold{E_{ij}} k) 
        &= \lim_{t \to 0} \frac{1}{t} 
        \left[ \Pi((\exp t \bold{E_{ij}})k.\l) - \Pi(k.\l)
        \right] 
        \\
        &= 
        \Pi([\bold{E_{ij}}, k.\l]) 
        \\
        &= 
        \begin{cases}
            (\mu_j - \mu_i) \bold{F}_{ij} & j < n 
            \\ 
            \sum_{h = 1}^{n-1} z_h (1+\delta_{ih}) \bold{F}_{ih} & j = n 
        \end{cases}
        .
    \end{align*}
    The Jacobian determinant has one nonzero term (the one that associates $\bold{E_{ij}}$ to $\bold{F_{ij}}$ for $j < n$ and $\bold{F_{ii}}$ for $j = n$). Hence
    \begin{align*}
    \det D_k F = 
    \prod_{1 \leq i < j \leq n-1} (\mu_j-\mu_i) \prod_{1 \leq i \leq n-1} 2 z_i
    &= 
    \pm 
    2^{n-1}
    \prod_{\substack{1 \leq i \leq n \\ 1 \leq j \leq n - 1}} \a{\l_i - \mu_j}^{1/2}
    ,
    \end{align*}
    which is nonvanishing everywhere. Thus $F$ defines a smooth $2^{2n-2}$-fold covering, and pushing forward along $F$ we see that the left hand side of (\ref{eq:transfer}) is proportional to (\ref{eq:middle}).

    On the other hand, define 
    \begin{align*}
    &G : SO(n-1) \times \mc{M} \to \mc{A} \\ 
    &G(k',\mu) = k'.\mu
    .
    \end{align*} 
    It suffices to compute the differential at $k' = 1$. Take $\{ \bold{E_{ij}'} : 1 \leq i < j \leq n-1 \}$ as a basis for $T_{1}SO(n-1)$ and let $(e_i')_{1 \leq i \leq n-1}$ be the obvious basis for $T_\mu \mc{M}$. Then 
    \begin{align*}
    D_{(1,\mu)} G(\bold{E_{ij}'} ,0) 
    &= 
    [\bold{E_{ij}'}, \mu] = (\mu_j - \mu_i)\bold{F_{ij}'}
    \\
    D_{(1,\mu)} G(0, e_i') 
    &= \bold{F_{ii}'}
    \end{align*}
    and the Jacobian determinant is $\pm \prod_{1 \leq i < j \leq n-1} \a{\mu_i-\mu_j}$, so the right hand side of (\ref{eq:transfer}) pushes forward to (\ref{eq:middle}).

\end{proof}
Define
\[
J_n(\l) := \int_\mc{M}  
    A_{n-1}(\mu) 
    J(\mu) \, d\mu 
    .
\]
The next step is to show $I_n(\sqrt{n-1}\l) \ll J_n(\l) \ll I_n(\l/\sqrt{n})$. Then it will suffice to prove $J_n(\l) \asymp \norm{\l}^{-n+1} L_n(\l)$. Applying Lemma \ref{lem:transfer} to Equation (\ref{eq:indupper}) we get
\begin{align*}
    I_n(\l) 
    &\ll 
    \int_\mc{M}  
    \int_{SO(n-1)}
    B'(k'.\mu) 
    \, dk' 
    J(\mu) \, d\mu 
    \\
    &= 
    \int_\mc{M}  
    I_{n-1}(\mu) 
    J(\mu) \, d\mu 
    .
\end{align*}
By Equation (\ref{eq:nonzerotrace}) we may restrict to the region where $\a{\Tr \mu} < \sqrt{n-1}$, in which we have $I_{n-1}(\mu) = I_{n-1}(\mu_0/\sqrt{1-(\Tr \mu)^2/(n-1)})$, where $\mu_0$ is the tracefree part of $\mu$. By monotonicity, this is $\leq I_{n-1}(\mu_0)$, by induction $I_{n-1}(\mu_0) \asymp A_{n-1}(\mu_0)$ and since $\norm{\mu_0 - \mu}$ is bounded, $\answer_{n-1}(\mu_0) \asymp \answer_{n-1}(\mu)$. 
The upper bound now reads
\[
I_n(\l)
\ll 
\int_{\substack{\mu \in \mc{M} \\ \a{\Tr \mu} < \sqrt{n-1} }}  
A_{n-1}(\mu) J(\mu) d\mu. 
\]
Scaling $\lambda$ and $\mu$ by $\sqrt{n-1}$ this is equivalent to 
\begin{align*}
    I_n(\sqrt{n-1} \l) 
    &\ll 
\int_{\substack{\mu \in \mc{M} \\ \a{\Tr \mu} < 1}}  
A_{n-1}(\sqrt{n-1}\mu) J(\mu) d\mu
,
\end{align*}
and since $\answer_{n-1}(\sqrt{n-1}\mu) \asymp \answer_{n-1}(\mu)$ the RHS is $\asymp J_n(\l)$, so $I_n(\sqrt{n-1}\l) \ll J_n(\l)$.

On the other hand, for the lower bound we have 
\begin{align*}
    I_n(\l) 
    &\gg 
    \int_\mc{M}  
    \int_{SO(n-1)}
    B'(\sqrt{n} k'.\mu) 
    \, dk' 
    J(\mu) \, d\mu 
    \\
    &= 
    \int_\mc{M}  
    I_{n-1}(\sqrt{n}\mu) 
    J(\mu) \, d\mu 
    ,
\end{align*}
and we may restrict the integral to the region where $\Tr \mu < 1/\sqrt{n}$, in which 
\begin{align*}
I_{n-1}(\sqrt{n}\mu) 
&= I_{n-1}(\sqrt{n}\mu_0/\sqrt{1-(\Tr \sqrt{n}\mu)^2/(n-1)}) 
\\
&\geq 
I_{n-1}(\sqrt{n}\mu_0/\sqrt{(n-2)/(n-1)})
\\
&\asymp 
\answer_{n-1}(\mu_0)
\\
&\asymp 
\answer_{n-1}(\mu)
. 
\end{align*}
So
\begin{align*}
    I_n(\l) \gg 
    \int_{\substack{\mu \in \mc{M}
    \\
    \a{\Tr \mu} < 1/\sqrt{n}}}
    \answer_{n-1}(\mu) J(\mu) \, d\mu 
    ,
\end{align*}
and scaling $\l$ and $\mu$ by $1/\sqrt{n}$ we get  $I_n(\l/\sqrt{n}) \gg J_n(\l)$. 

Our strategy for showing $J_n(\l) \asymp \norm{\l}^{-n+1} L_n(\l)$ will be to simplify $J_n(\l)$ to a convolution. Define 
\begin{align*}
    \mc{H} 
    &:=
    {
    \textstyle
    \{ 
    \mu \in \mc{M} : \a{\sum_i \mu_i} < 1 
    \}
    }
    \\
    F_i(x) &:= 
    \1_{[\l_i,\l_{i+1}]}
    \a{\l_i - x}^{-1/2}
    \a{\l_{i+1}- x}^{-1/2}
    \\
    \xt{i}{j} 
    &:= 
    \a{\mu_i - \mu_j}
    \a{\l_i - \mu_j}^{-1/2}
    \a{\l_{j+1} - \mu_i}^{-1/2}
    .
\end{align*}
Then 
\begin{align}\label{eq:main}
    J_n(\l) 
    = 
    \int_\mc{H} 
    (1+\norm{\mu})^{-n+2}
    L_{n-1}(\mu) 
    \prod_{1 \leq i < j \leq n-1} 
    \xt{i}{j} 
    \prod_{1 \leq i \leq n-1}
    F_i(\mu_i) 
    \, d\mu 
    . 
\end{align}
Note that
\[
F_i(x) 
\asymp 
\bold{1}_{[\l_i,\l_{i+1}]}
\abs{\l_i - \l_{i+1}}^{-1/2}
\left(
\abs{\l_i - x}^{-1/2}
+
\abs{\l_{i+1} - x}^{-1/2}
\right)
,
\]
implying $\int F_i(x) dx \asymp 1$. Also, $\xt{i}{j} \leq 1$ on $\mc{H}$. If we can eliminate the factors $(1+\norm{\mu})^{-n+2}$, $L_{n-1}(\mu)$, and $\xt{i}{j}$ then the integral becomes simply a convolution evaluated at 0: 
\begin{align*}\label{eq:conv}
\int_\H \prod_{1 \leq i \leq n-1}
F_i(\mu_i) \, d\mu
&= 
\int 
\dots 
\int 
{ \textstyle
\1_{[-1,1]}\left(-\sum_i \mu_i\right)
}
\prod_{1 \leq i \leq n-1}
F_i(\mu_i) \, d\mu_{n-1} \dots d\mu_1 
\\
&=
\left( \1_{[-1,1]} * \left( \bigast_{1 \leq i \leq n-1} F_i \right) \right) (0)
.
\end{align*}
Let $d = \l_n - \l_1$ and let $d_i = \l_{i+1} - \l_i$ be the ``spectral gaps.'' 
Then Lemma \ref{lem:main} follows from the following cases: 

First, by Lemma \ref{lem:generallower}, we have unconditionally that $J_n(\l) \gg \norm{\l}^{-n+1}$.  
\begin{itemize}
    \item Let $I$ be the index of the largest gap. Suppose $d_I > (1-1/100n)d$. Then:
    \begin{itemize}
        \item If $(n,I) \neq (4,2)$, then $L_n(\l) \asymp 1$ and it remains to show $J_n(\l) \ll \norm{\l}^{-n+1}$. See Lemma \ref{lem:onegapA}. 
        \item If $(n,I) = (4,2)$, then the exceptional term term in $L_n(\l)$ is large. See Lemma \ref{lem:onegapB}.
    \end{itemize}
    \item Otherwise, let $I < J$ be the two largest gaps, breaking ties arbitrarily. Then $(1-1/100n)d \geq d_I, d_J \geq d/100n^2$.
    \begin{itemize}
        \item If $(I,J) \neq (1,n-1)$ then $L_n(\l) \asymp 1$ and it remains to show $J_n(\l) \ll \norm{\l}^{-n+1}$. See Lemma \ref{lem:twogapA}.
        \item If $(I,J) = (1,n-1)$ then the main term in $L_n(\l)$ is large. See Lemma \ref{lem:twogapB}.
    \end{itemize}
\end{itemize}

\section{Convolution Bounds}

\subsection{The general lower bound}\label{sec:generallower}
\begin{lem}\label{lem:generallower}
    For all $\l$ with $\l_1 < \l_2 < \dots < \l_n$, we have $J_n(\l) \gg \norm{\l}^{-n+1}$.
\end{lem}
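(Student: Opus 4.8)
The integrand of $J_n(\l)$ in \eqref{eq:main} is a product in which $\prod_{i<j}\xt{i}{j}$ is the only factor that can become small, so the plan is to bound every other factor below by a constant or a power of $\norm{\l}$, restrict the $\mu$--integral to a sub-box of $\mc{M}$ on which every $\xt{i}{j}\gg 1$, and compute the contribution of that box. I would begin with the routine reductions: we may assume $\Tr\l=0$ (the only case used later, and the one in which $0$ sits well inside the interval over which $\sum_i\mu_i$ ranges on $\mc{M}$) and, as already noted, that $\norm{\l}$ exceeds a constant depending on $n$ and that $n\ge 3$. Writing $d=\l_n-\l_1$ and $d_i=\l_{i+1}-\l_i$, so $\sum_i d_i=d$ and $\max_i d_i\ge d/(n-1)$, one checks in one line each from $\l_1\le 0\le\l_n$ and $\Tr\l=0$ that $\abs{\l_1},\abs{\l_n}\le d$ (hence $d\asymp\norm{\l}$) and that $\min(\l_n,\abs{\l_1})> d/(2n)$; in particular $t^{*}:=\l_n/d\in(\tfrac1{2n},1-\tfrac1{2n})$. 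Also $\norm{\mu}\ll\norm{\l}$ on $\mc{M}$, so $(1+\norm{\mu})^{-n+2}\gg\norm{\l}^{-n+2}$, and $L_{n-1}(\mu)\gg 1$ since every factor $\log'(\cdot)\ge\log 2$; these two factors therefore cost nothing.

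Next I would introduce the box
\[
\mc{B}=\bigl\{\mu\in\mc{M}:\ \mu_i=\l_i+t_i d_i,\ \abs{t_i-t^{*}}\le \tfrac1{8n}\ \text{for all }i\bigr\},
\]
whose center corresponds to $t=t^{*}$. On $\mc{B}$ every $t_i$ stays in a fixed compact subinterval of $(0,1)$, so $\mc{B}\subset\mc{M}$ and $F_i(\mu_i)=(t_i(1-t_i))^{-1/2}d_i^{-1}\ge d_i^{-1}$. For $i<j$ one has $\mu_i<\l_{i+1}\le\l_j<\mu_j$, hence $\xt{i}{j}=\frac{\mu_j-\mu_i}{\sqrt{(\mu_j-\l_i)(\l_{j+1}-\mu_i)}}$; here the denominator is $\le\l_{j+1}-\l_i=\sum_{k=i}^j d_k$, while the numerator telescopes to $(1-t_i)d_i+\sum_{k=i+1}^{j-1}d_k+t_j d_j$, which is $\ge \tfrac1{4n}\sum_{k=i}^j d_k$, so $\xt{i}{j}\gg 1$ and $\prod_{i<j}\xt{i}{j}\gg 1$ on $\mc{B}$. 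The point of taking $t^{*}=\l_n/d$ is that $\sum_i\mu_i=-\l_n+t^{*}d=0$ at the center of $\mc{B}$; thus $\mc{H}\cap\mc{B}$ is the box cut by the slab $\{|\sum_i(t_i-t^{*})d_i|<1\}$ through its center.

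The last step is the lower bound on $\operatorname{vol}(\mc{H}\cap\mc{B})$. In the variables $v_i=t_i-t^{*}$ this volume equals $(\prod_i d_i)$ times $\operatorname{vol}\{v\in[-\tfrac1{8n},\tfrac1{8n}]^{n-1}:\abs{\langle v,d\rangle}<1\}$, where $d=(d_1,\dots,d_{n-1})$; I would bound the cube below by its inscribed Euclidean ball and slice that ball by the slab $\{\abs{\langle v,d/\norm{d}\rangle}<1/\norm{d}\}$, which passes through its center, obtaining a lower bound $\gg 1/\norm{d}\asymp 1/\norm{\l}$ (using $\norm{d}\asymp\norm{\l}$ large). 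Hence $\operatorname{vol}(\mc{H}\cap\mc{B})\gg(\prod_i d_i)/\norm{\l}$, and feeding the pointwise lower bounds on $\mc{B}$ and this volume estimate into \eqref{eq:main} gives $J_n(\l)\gg\norm{\l}^{-n+2}\bigl(\prod_i d_i^{-1}\bigr)\operatorname{vol}(\mc{H}\cap\mc{B})\gg\norm{\l}^{-n+1}$.

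The only steps requiring genuine care are the telescoping lower bound $\xt{i}{j}\gg 1$ on $\mc{B}$ and the ``slab through a cube'' volume estimate; neither is deep, but one must check that the ``$\norm{\l}$ large'' thresholds they implicitly use are absorbed into the standing assumption, and it is precisely here — and in placing the center of $\mc{B}$ at $\sum_i\mu_i=0$ — that tracefreeness enters. I expect the volume estimate to be the part most prone to a sloppy constant, though comparison with the inscribed ball keeps it clean.
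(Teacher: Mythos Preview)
Your proof is correct and structurally identical to the paper's: both restrict to the same sub-box of $\mc{M}$ (your centre $\mu_i=\l_i+t^{*}d_i$ with $t^{*}=\l_n/d$ is exactly the paper's $m_i=\theta\l_i+(1-\theta)\l_{i+1}$, since $\theta=-\l_1/d=1-t^{*}$), and both establish the same pointwise bounds $\xt{i}{j}\gg 1$ and $F_i\gg d_i^{-1}$ on that box by the same telescoping computation. The one genuine difference is the endgame for the volume of $\mc{H}\cap\mc{B}$: the paper rewrites the remaining integral as a convolution $G(0)$, proves a separate lemma (iterated convolutions of centred indicator functions are nonincreasing away from $0$), and then argues $G(0)\ge \norm{G}_1/\abs{\supp G}\gg\norm{\l}^{-1}$; your direct slab-through-inscribed-ball estimate reaches the same bound $\gg\norm{\l}^{-1}$ without that auxiliary lemma. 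Both routes are fine, and yours is a small economy since the convolution lemma is not reused elsewhere in the paper.
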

\begin{proof}
Applying the inequalities $L_n(\mu) \gg 1$ and $ (1+\norm{\mu})^{-n+2} \gg \norm{\l}^{-n+2}$ to the definition of $J_n(\l)$ gives 
\[
J_n(\l) 
\gg 
\norm{\l}^{-n+2}
\int_\mc{H} 
    \prod_{1 \leq i < j \leq n-1} 
    \xt{i}{j} 
    \prod_{1 \leq i \leq n-1}
    F_i(\mu_i) 
    \, d\mu 
    . 
\]
We define a subset of $\mc{H}$ on which the $\xt{i}{j}$ are bounded away from 0. Let $\theta = -\l_1/d$. From $(n-1)\l_1 + \l_n \leq \l_1 + \l_2 + \dots + \l_n = 0$ we deduce $\l_1 \leq -d/n$ and similarly $\l_1 + (n-1) \l_n \geq \l_1 + \l_2 + \dots + \l_n = 0 \implies (n-1)d \geq -n \l_1$. 
Thus $1/n \leq \theta \leq 1 - 1/n$. Let $m_i = \theta \l_i + (1-\theta) \l_{i+1}$. Then $\sum_{1 \leq i \leq n-1} m_i = 0$.
Define intervals $M_i \subset [\l_i, \l_{i+1}]$ centered on the $m_i$, given by 
\[
M_i = 
[
m_i - d_i / 2n 
,
m_i + d_i / 2n 
]
\]
and restrict the integral to the set
\[
\mathcal{H}'
= 
\{
\mu : \mu_i \in M_i , 
{\textstyle 
\abs{\sum_i \mu_i}} < 1
\}
\subset \mc{H}
.
\]
Then for $\mu \in \mc{H}'$ we have
\[
\frac{\abs{\mu_i - \mu_j}}{\abs{\mu_i - \l_{j+1}}}
= 
\frac{\abs{\mu_i - \l_j} + \abs{\l_j - \mu_j}}{\abs{\mu_i - \l_j} + \abs{\l_j - \l_{j+1}}}
\geq 
\frac{\abs{\l_j - \mu_j}}{\abs{\l_j - \l_{j+1}}}
\geq \frac{1}{2n}
,
\]
and similarly $\abs{\mu_i-\mu_j}/\abs{\l_i - \mu_j} \geq 1/2n$,
so $\kappa_{ij} \geq 1/2n$. 
Also, for $\mu_i \in M_i$ we have 
\begin{align*}
    F_i(\mu_i) 
    \asymp 
    \a{\l_i - \l_{i+1}}^{-1/2}
    \left(
    \a{\l_i - \mu_i}^{-1/2}
    +
    \a{\l_{i+1} - \mu_i}^{-1/2}
    \right)
    \gg d_i^{-1}
    .
\end{align*}
Applying the lower bounds for $\xt{i}{j}$ and $F_i$ gives
\[
J_n(\l) 
\gg
\norm{\l}^{-n+2}
\int_{\mc{H}'} 
\prod_{1 \leq i \leq n-1}
d_i^{-1} \1_{M_i}(\mu_i)
d\mu_i 
,
\]
and this is equivalent to a convolution 
\begin{align*}
J_n(\l) 
&\gg 
\norm{\l}^{-n+2}
G(0)
\\
G
&= 
\1_{[-1,1]} * 
\left(
\bigast_{1\leq i \leq n-1} d_i^{-1} \1_{M_i}
\right)
.
\end{align*}
Since $\sum_i m_i = 0$ we can translate each factor to be centered at 0,
\begin{align*}
G
&=
\1_{[-1,1]} * 
\left(
\bigast_{1\leq i \leq n-1} d_i^{-1} \1_{[-d_i/2n,d_i/2n]}
\right)
.
\end{align*}
Then by the following Lemma, $G(x)$ is maximized at $x=0$. 
\begin{lem}\label{lem:conv}
    For a sequence of reals $a_i \geq 0$ let 
    \[ 
    f_n(x) = \bigast_{1 \leq i \leq n} \1_{[-a_i, a_i]}(x)
    \]
    then $f_n$ is maximized at 0 for all $n$. 
\end{lem}
\begin{proof}
    The $f_n$ are obviously even. We prove a stronger statement, that $f_n$ is nonincreasing on $[0, \infty)$. The case $n=1$ is trivial. For $n > 1$ and $x > 0$ we have
    \begin{align*}
    f_n(x) 
    &= 
    \int \1_{[-a_n, a_n]} (x-y) f_{n-1}(y) dy 
    \\ 
    &= 
    \int_{x - a_n}^{x + a_n} f_{n-1}(y) dy 
    \end{align*}
    implying, for $0 < x < x'$
    \begin{align*}
    f_n(x) - f_n(x')  
    &=
    \int_{x-a_n}^{x+a_n}
    f_{n-1}(y) dy 
    - 
    \int_{x'-a_n}^{x'+a_n}
    f_{n-1}(y) dy 
    \\
    &=
    \int_{x-a_n}^{x'-a_n}
    f_{n-1}(y) dy 
    - 
    \int_{x+a_n}^{x'+a_n}
    f_{n-1}(y) dy 
    \\
    &=
    \int_x^{x'} f_{n-1}(y - a_n) - f_{n-1}(y + a_n) dy 
    ,
    \end{align*}
    and by induction the integrand is $\geq 0$. 
\end{proof}

Finally, note that $\int G(x) dx = 2\prod_i \norm{d_i^{-1} \1_{M_i}}_1 \gg 1$ and $G(x)$ is supported in an interval of length $\ll \norm{\l}$. Thus, $G(0) \gg \norm{\l}^{-1}$ and we are done. 
\end{proof}
\subsection{Monotone rearrangement}
To upper bound various convolutions, we introduce the following convenient tool \cite{rearrangement}. For $f : \R \to [0,\infty)$ define the level set 
\[
\{f > t\} = \{ x \in \R : f(x) > t \},
\]
and the ``layer cake representation'' of $f$,
\[
f(x) = \int \1_{\{f > t \}}(x) \, dt .
\]
For a set $S \subset \R$ define the \textit{rearranged} set $S^* = [0, \mu(S))$. Finally define the \textit{monotone rearrangement} 
\[
f^*(x) = \int \1_{\{f > t\}^*}(x) \, dt .
\]



For example, the monotone rearrangement of $\1_{[a,b]}$ is $\1_{[0,b-a]}$, and the monotone rearrangement of
\begin{align*}
    f(x) &= \1_{[a,b]} \abs{a - x}^{-1/2} \abs{b - x}^{-1/2}
\end{align*}
is 
\begin{align*}
    f^*(x) &= 
    \1_{[0,b-a]} (x/2)^{-1/2} ((b-a) - x/2)^{-1/2}
    \\ 
    &\ll 
    (b-a)^{-1/2} x^{-1/2}
    .
\end{align*}
We note basic properties such as $f \leq g \implies f^* \leq g^*$ and $(af)^* = a f^*$. The real workhorse is the $n$-ary \textit{Hardy-Littlewood Rearrangement inequality}:
\begin{lem} For $f_1, \dots, f_n : \R \to [0,\infty)$ we have
    \[
\int \prod_{i=1}^n f_i(x) dx \leq \int \prod_{i=1}^n f_i^*(x) \, dx 
.
\]
\end{lem}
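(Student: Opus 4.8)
The plan is to reduce both sides to integrals of measures of super-level sets using the layer-cake representation and Tonelli, and then to exploit that the rearranged super-level sets are half-lines anchored at $0$, hence totally ordered by inclusion.

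First I would write each $f_i$ in layer-cake form, $f_i(x) = \int_0^\infty \1_{\{f_i > t_i\}}(x)\,dt_i$, multiply the $n$ representations together using $\prod_i \1_{\{f_i > t_i\}} = \1_{\bigcap_i \{f_i > t_i\}}$, integrate in $x$, and apply Tonelli (valid since every integrand is non-negative). This gives
\[
\int_\R \prod_{i=1}^n f_i(x)\,dx = \int_{(0,\infty)^n} \mu\Big( \bigcap_{i=1}^n \{f_i > t_i\} \Big)\, dt_1\cdots dt_n ,
\]
and the same computation applied to the $f_i^*$ yields the identical formula with each $f_i$ replaced by $f_i^*$. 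So it suffices to prove, for every $(t_1,\dots,t_n)$, the pointwise bound
\[
\mu\Big(\bigcap_{i=1}^n \{f_i > t_i\}\Big) \le \mu\Big(\bigcap_{i=1}^n \{f_i^* > t_i\}\Big) .
\]

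To obtain this I would invoke two standard facts about the monotone rearrangement. First, $f_i^*$ is non-increasing, so its super-level set $\{f_i^* > t_i\}$ is an interval of the form $[0, c_i)$. Second, $f_i^*$ is equimeasurable with $f_i$, so $c_i = \mu(\{f_i^* > t_i\}) = \mu(\{f_i > t_i\})$. The second fact comes out of unwinding the definition: with $r(s) := \mu(\{f > s\})$ one has $f^*(x) = \int_0^\infty \1_{[0, r(s))}(x)\,ds = \mu(\{s \ge 0 : r(s) > x\})$, and since both $r$ and $x \mapsto \mu(\{s : r(s) > x\})$ are non-increasing, the duality of these generalized inverses gives $\mu(\{f^* > t\}) = \mu(\{f > t\})$. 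Granting these, $\bigcap_i \{f_i^* > t_i\} = [0, \min_i c_i)$, so its measure is exactly $\min_i \mu(\{f_i > t_i\})$; meanwhile $\bigcap_i \{f_i > t_i\}$ is contained in $\{f_j > t_j\}$ for every $j$, so its measure is $\le \min_i \mu(\{f_i > t_i\})$. That is precisely the required inequality, and integrating over $t$ finishes the lemma.

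The step I expect to cost the most care is the equimeasurability of $f$ with $f^*$ (and the attendant claim that $\{f^* > t\}$ is precisely the half-open interval $[0, \mu(\{f>t\}))$): making it airtight requires bookkeeping about right- versus left-continuity of the distribution function $r$ and strict versus non-strict inequalities in the super-level sets. This is entirely classical — see \cite{rearrangement} — so I would cite it rather than reproduce the measure-theoretic details, especially as none of the later applications of this lemma are sensitive to such subtleties.
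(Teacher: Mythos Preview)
Your proposal is correct and follows essentially the same route as the paper: layer-cake representation, Tonelli, and reduction to the pointwise inequality $\mu(\bigcap_i \{f_i > t_i\}) \le \mu(\bigcap_i \{f_i^* > t_i\})$. The paper simply asserts this last inequality, whereas you supply the one-line justification (the rearranged super-level sets are nested half-lines of the same lengths, so their intersection has measure $\min_i \mu(\{f_i > t_i\})$); your caveat about equimeasurability is well placed but, as you note, is standard and citeable.
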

\begin{proof}
    \begin{align*}
    \int_\mathbb{R} f_1 (x) \dots f_n(x) \, dx 
    &= 
    \int_\mathbb{R}
    \int_0^\infty \dots \int_0^\infty 
    \1_{\{ f_1 > t_1\} }(x)
    \dots 
    \1_{\{ f_n > t_n\} }(x)
    \, dt_n
    \dots 
    \, dt_1
    \, dx 
    \\
    &= 
    \int_0^\infty \dots \int_0^\infty 
    \mu(
    \{ f_1 > t_1 \} 
    \cap \dots \cap 
    \{ f_n > t_n \} 
    )
    \, dt_n
    \dots 
    \, dt_1
    \\
    &\leq 
    \int_0^\infty \dots \int_0^\infty 
    \mu(
    \{ f^*_1 > t_1 \} 
    \cap \dots \cap 
    \{ f^*_n > t_n \} 
    )
    \, dt_n
    \dots 
    \, dt_1
    \\
    &=
    \int_\mathbb{R}
    f^*_1 (x) \dots f^*_n(x)
    \,
    dx
    .
\end{align*}
\end{proof}
For $n=2$ this implies $\norm{f * g}_\infty \leq \langle f^*, g^* \rangle$, where $f*g$ is the convolution of $f$ and $g$. We also note the following inequalities for $0 < a < b$:
\begin{align}\label{eq:logaverage}
\int_a^b x^{-1/2} \left(\log' \frac{T}{x}\right)^k \, dx 
&\ll_k 
b^{1/2} \left(\log' \frac{T}{b}\right)^k 
\\
\int_a^b x^{-1} \left(\log' \frac{T}{x}\right)^k \, dx
&\ll 
\log 
\f{b}{a} 
\left( 
\log' \f{T}{a}
\right)^k
,
\end{align}
the first of which can be verified by applying integration by parts $k$ times. 
\subsection{One Large Gap}
In this section we assume there exists $I$ such that $d_I > (1-1/(100n))d$.

Let $d' = d - d_I =  \a{\l_1 - \l_I} + \a{\l_{I+1} - \l_n} \leq d/100n$. We show that $n$ and $I$ determine the positions of $\l_i$ and $\mu_i$ up to an $O(d')$ error. Write $\pm C$ for an error of absolute value $\leq C$. Then $\l_i = \l_1 \pm d'$ for $i \leq I$ and $\l_i = \l_n \pm d'$ for $i > I$. Writing
\begin{align*}
0 &= \sum_i \l_i 
\\
&= I\l_1 + (n-I)\l_n \pm nd' 
\end{align*}
we get $\l_n = Id/n \pm d'$ and $\l_1 = (I-n)d/n \pm d'$. 
Then
\begin{align}\label{eq:onegaplam}
    \l_i 
    = 
    \begin{cases}
    \displaystyle{\frac{(I-n)d}{n} \pm 2d'} & i \leq I 
    \\
    \displaystyle{\frac{Id}{n} \pm 2d'} & i > I 
    \end{cases}
\end{align}
and (using $\a{\sum_i \mu_i} \leq 1$ to find $\mu_I$)
\begin{align}\label{eq:onegapmu}
    \mu_i 
    = 
    \begin{cases}
        \displaystyle{\frac{(I-n)d}{n} \pm 2d'} & i < I 
        \\
        \displaystyle{\frac{(I-2n)d}{n} \pm (2nd'+1)} & i = I
        \\
        \displaystyle{\frac{Id}{n} \pm 2d'} & i > I 
    \end{cases}
\end{align}

\subsubsection{The upper bound when $(n,I) \neq (4,2)$}

\begin{lem}\label{lem:onegapA}
    Assume there exists $I$ such that $d_I > (1-1/(100n))d$ and $(n,I) \neq (4,2)$. Then $L_n(\l) \asymp 1$ and $J_n(\l) \asymp \norm{\l}^{-n+1}$. 
\end{lem}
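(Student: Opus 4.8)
The plan is to analyze the convolution representation \eqref{eq:main} for $J_n(\l)$ under the assumption $d_I > (1-1/(100n))d$ with $(n,I)\neq(4,2)$. First I would dispose of the factor $L_{n-1}(\mu)$: by \eqref{eq:onegapmu}, on $\mc{H}$ the $\mu_i$ cluster into two groups, those near $(I-n)d/n$ (indices $i<I$) and those near $Id/n$ (indices $i>I$), with $\mu_I$ sitting far below both; since the relevant differences entering $L_{n-1}(\mu)$ — namely $1 + |\mu_2| + |\mu_{n-2}|$ (for the $(n-1)$-variable form, with the appropriate shift of indices) — are of size $\asymp\norm{\mu}\asymp d$, each logarithm is $\log' O(1) = O(1)$, so $L_{n-1}(\mu) \asymp 1$ on the effective support. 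The same estimates show $1+|\l_2|+|\l_{n-1}| \asymp d \asymp \norm{\l}$ (here one uses $(n,I)\neq(4,2)$, i.e.\ not both $\l_2$ and $\l_{n-1}$ are forced to be the ``far'' values in a way that shrinks the argument — more precisely, for $I\neq 2$ at least one of $\l_1,\l_2$ equals $\l_1\pm 2d'$ and stays of size $d$, and for $n\neq 4$ the collinear case does not create the exceptional denominator), hence $L_n(\l)\asymp 1$ as claimed.

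Next, for $J_n(\l)\ll\norm{\l}^{-n+1}$ I would bound $(1+\norm{\mu})^{-n+2}\asymp d^{-n+2}$ (valid since $\norm{\mu}\asymp d$ on the support, as $\norm{\l}$ is assumed large) and $\xt{i}{j}\leq 1$, reducing to
\[
J_n(\l) \ll d^{-n+2}\int_\mc{H}\prod_{i=1}^{n-1}F_i(\mu_i)\,d\mu = d^{-n+2}\Bigl(\1_{[-1,1]} * \bigast_{i=1}^{n-1}F_i\Bigr)(0).
\]
Then I would apply the Hardy–Littlewood rearrangement inequality: $\bigl(\1_{[-1,1]}*\bigast F_i\bigr)(0)\leq \langle \1_{[-1,1]}^*, (\bigast F_i)^*\rangle$, and more usefully bound the value at $0$ of the convolution by an iterated integral after passing to monotone rearrangements $F_i^* \ll d_i^{-1/2}x^{-1/2}$. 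Since all gaps $d_i$ with $i\neq I$ satisfy $d_i \leq d'\leq d/100n$, while $d_I\asymp d$, the ``large'' factor $F_I^* \ll d^{-1/2}x^{-1/2}$ contributes the decisive savings, and the $n-2$ small factors each contribute $\asymp 1$ after integration (using $\int F_i \asymp 1$). The bookkeeping: the convolution of the $n-2$ small-gap $F_i$ is supported in an interval of length $\ll d'\ll d$ and has $L^1$-norm $\asymp 1$, so convolving with $F_I^* \ll d^{-1/2}x^{-1/2}$ and then with $\1_{[-1,1]}$ and evaluating near $0$ gives $\ll d^{-1/2}\cdot 1 \cdot$ (a further $d^{-1/2}$ from integrating $x^{-1/2}$ against a bounded-mass bump), i.e.\ $\ll d^{-1}$; total $J_n(\l)\ll d^{-n+2}\cdot d^{-1} = d^{-n+1}\asymp\norm{\l}^{-n+1}$.

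The matching lower bound $J_n(\l)\gg\norm{\l}^{-n+1}$ is already supplied by Lemma \ref{lem:generallower}, so combined with $L_n(\l)\asymp 1$ this gives $J_n(\l)\asymp\norm{\l}^{-n+1} = \norm{\l}^{-n+1}L_n(\l)$, which is the assertion.

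The main obstacle I anticipate is the careful rearrangement/convolution bookkeeping in the upper bound: one must track which factor $F_i^*$ carries the $d^{-1/2}$ savings, verify that evaluating the full convolution at (or near) $0$ — rather than taking an unconditional sup — does not lose a logarithm, and make sure the constraint region $\mc{H}$ (the trace condition $|\sum\mu_i|<1$) is correctly handled when it is narrow compared to the supports of the $F_i$. A secondary subtlety is confirming that the $\xt{i}{j}\leq 1$ bound is not too lossy here — it is not, precisely because in the one-large-gap regime the $\mu_i$ are well-separated and no extra logarithmic concentration occurs; but one should check that discarding $\xt{i}{j}$ does not prevent reaching the sharp exponent, which it does not since the $(1+\norm{\mu})^{-n+2}$ factor already accounts for $n-2$ of the needed $n-1$ powers and the single large $F_I$ supplies the last one.
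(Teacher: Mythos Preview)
Your treatment of $L_n(\l)\asymp 1$, $L_{n-1}(\mu)\asymp 1$, $(1+\norm{\mu})^{-n+2}\asymp d^{-n+2}$, and the appeal to Lemma~\ref{lem:generallower} for the lower bound are all in line with the paper. The gap is in the upper bound for the convolution.

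After reducing to $J_n(\l)\ll d^{-n+2}\bigl(\1_{[-1,1]}*\bigast_i F_i\bigr)(0)$ you pass to rearrangements and claim that integrating $F_I^*(x)\ll d^{-1/2}x^{-1/2}$ against the rearranged ``bounded-mass bump'' $G^*$ (with $G=\1_{[-1,1]}*\bigast_{i\neq I}F_i$) yields a further $d^{-1/2}$. It does not. The rearranged bump $G^*$ is supported on $[0,O(d'+1)]$, has $\norm{G^*}_1\asymp 1$ and $\norm{G^*}_\infty\ll 1$, so
\[
\int F_I^*(x)\,G^*(x)\,dx \;\ll\; d^{-1/2}\int_0^{O(d')} x^{-1/2}\,dx \;\ll\; (d'/d)^{1/2}\;\ll\;1,
\]
not $d^{-1}$. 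Rearrangement bounds the \emph{sup} of the convolution, and that sup genuinely is only $\ll 1$ here; what you need is the value \emph{at $0$}, which is smaller for a reason that rearrangement throws away. Your own caveat about ``evaluating at $0$ rather than taking an unconditional sup'' identifies the issue, but the loss is a full power of $d$, not a logarithm.

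The missing observation (this is how the paper proceeds) is that the trace constraint pins $\mu_I$ down: on $\H$ the other $\mu_i$ cluster near $(I-n)d/n$ and $Id/n$, so $\a{\sum_i\mu_i}<1$ forces $\mu_I$ into a window of width $O(d'+1)$ sitting at distance $\gg d$ from both endpoints $\l_I,\l_{I+1}$ (this is exactly Equation~\eqref{eq:onegapmu}). Hence $F_I(\mu_I)\asymp d^{-1}$ pointwise on the effective support, and then
\[
\Bigl(\1_{[-1,1]}*\bigast_i F_i\Bigr)(0)\;\ll\; d^{-1}\Bigl\|\1_{[-1,1]}*\bigast_{i\neq I}F_i\Bigr\|_\infty\;\ll\; d^{-1}\prod_{i\neq I}\norm{F_i}_1\;\ll\; d^{-1},
\]
which gives the correct $J_n(\l)\ll d^{-n+1}$. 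Replace your rearrangement step with this pointwise bound on $F_I$ and the argument goes through.
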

By Equation \ref{eq:onegaplam} we have $\a{\l_2} \gg d$ so 
\begin{align*}
    \log' \f{\norm{\l}}{1 + \a{\l_2} + \a{\l_{n-1}}} \ll 1 
    .
\end{align*}
Also, if $n = 4$ but $I \neq 2$ then at least one of $\a{\l_1 - \l_2}$ and $\a{\l_3 - \l_4}$ is $\gg d$ so 
\begin{align*}
    \log' \f{\norm{\l}}{1 + \a{\l_1 - \l_2} + \a{\l_3 - \l_4}} \ll 1 
    .
\end{align*}
Thus $L_n(\l) \asymp 1$, and by Lemma \ref{lem:generallower}, $J_n(\l) \gg \norm{\l}^{-n+1}$, so it remains to show $J_n(\l) \ll \norm{\l}^{-n+1}$. 

If $I \geq 2$ then $\a{\mu_1} \gg d$, otherwise, if $I = 1$ then $\a{\mu_{n-1}} \gg d$. Thus $(1+\norm{\mu})^{-n+2} \asymp \norm{\l}^{-n+2}$.

If $n = 4$ and $I \neq 2$ then $\a{\mu_2} \gg d$ and $L_{n-1}(\mu) \asymp 1$. 

If $n \geq 5$, then at least one of $\a{\mu_2}$ and $\a{\mu_{n-2}}$ must be $\gg d$, so 
\[
\log' \f{\norm{\mu}}{1 + \a{\mu_2} + \a{\mu_{n-2}}} \asymp 1 
.
\]
If $n = 5$ then the exceptional factor in $L_{n-1}(\mu)$ is also $\asymp 1$ since at least one of $\a{\mu_1 - \mu_2}$ and $\a{\mu_3 - \mu_4}$ must be $\gg d$. Thus $L_{n-1}(\mu) \asymp 1$ for $n \geq 5$.

Bounding $\xt{i}{j} \leq 1$ for all $i,j$ we get 
\[
J_n(\l) \ll \norm{\l}^{-n+2} \int_\H \prod_i F_i(\mu_i) \, d\mu 
.
\]
By Equation (\ref{eq:onegapmu}) we have $\mu_I = (I-2n)d/n \pm (2nd' + 1)$. Then (taking $d$ sufficiently large) $\a{\l_I - \mu_I}$ and $\a{\l_{I+1}-\mu_I}$ are $\gg d$ so $F_I(t) \asymp \norm{\l}^{-1}$.
Finally 
\begin{align*}
    J_n(\l) 
    &\ll 
    \norm{\l}^{-n+1} 
    \int_\mc{H} 
    \prod_{i \neq I} F_i(\mu_i) \, d\mu
    \\
    &=
    \norm{\l}^{-n+1} 
    \left(
    \1_{[-1,1]} * \left( \bigast_{i \neq I} F_i \right)
    \right)(0)
    \\
    &\ll
    \norm{\l}^{-n+1} \prod_{i \neq I} \norm{F_i}_1 
    \\
    &\ll 
     \norm{\l}^{-n+1} 
\end{align*}
and we are done. 

\subsubsection{The case $(n,I) = (4,2)$}
\begin{lem}\label{lem:onegapB}
    Assume there exists $I$ such that $d_I > (1-1/(100n))d$ and $(n,I) = (4,2)$. Then 
    \[
    L_n(\l) \asymp \log' \frac{\norm{\l}}{1 + \a{\l_1-\l_2} + \a{\l_3-\l_4}}
    \],
    and $J_n(\l) \asymp \norm{\l}^{-n_1} L_n(\l)$. 
\end{lem}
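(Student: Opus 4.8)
The plan is to mirror the structure of the proof of Lemma \ref{lem:onegapA}, but now one of the logarithmic factors in $L_n(\l)$ genuinely contributes, so I must track it carefully rather than bound it by $1$. Here $n=4$, $I=2$, so by Equations (\ref{eq:onegaplam}) and (\ref{eq:onegapmu}) the eigenvalues cluster into two pairs: $\l_1,\l_2 = -d/2 \pm 2d'$ and $\l_3,\l_4 = d/2 \pm 2d'$, with $d' = \a{\l_1-\l_2} + \a{\l_3-\l_4} \leq d/400$. First I would verify the claimed value of $L_4(\l)$: since $\a{\l_2}, \a{\l_3} \asymp d$ the first two factors of $L_4$ are $\asymp 1$, leaving only $\log' \frac{\norm{\l}}{1 + \a{\l_1-\l_2}+\a{\l_3-\l_4}} = \log'\frac{\norm{\l}}{1+d'}$, and $\norm{\l} \asymp d$, so $L_4(\l) \asymp \log'(d/(1+d'))$. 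The lower bound $J_4(\l) \gg \norm{\l}^{-3}$ is already supplied by Lemma \ref{lem:generallower}, but that is not enough now — I need $J_4(\l) \gg \norm{\l}^{-3} L_4(\l)$ as well, so both directions require real work.

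For the \emph{upper bound}, I would start from Equation (\ref{eq:main}) with $n=4$, bound $L_{n-1}(\mu) = L_3(\mu) \ll 1$ (the three middle eigenvalues $\mu_1,\mu_2,\mu_3$ satisfy $\a{\mu_1} \asymp \a{\mu_3} \asymp d$ by (\ref{eq:onegapmu}), so $L_3(\mu) \asymp 1$), and bound $\xt{i}{j} \leq 1$ for the pairs $(i,j)$ not involving the index straddling the big gap, keeping the factor $\xt{i}{j}$ that is actually small. With $\mu_2 = -d \pm (\text{small})$ we get $F_2(\mu_2) \asymp \norm{\l}^{-1}$ as before, and $(1+\norm{\mu})^{-n+2} = (1+\norm{\mu})^{-2} \asymp \norm{\l}^{-2}$. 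That accounts for $\norm{\l}^{-3}$; the remaining integral is $\int F_1(\mu_1) F_3(\mu_3)\,\kappa\,d\mu_1 d\mu_3$ over $\mu_1 \in [\l_1,\l_2]$, $\mu_3 \in [\l_3,\l_4]$, where the surviving $\xt{}{}$ factor is bounded but contributes a $d'$-type constraint. Here $F_1$ is supported on an interval of length $d_1 = \a{\l_1-\l_2}$ and, by the monotone-rearrangement computation in Section 4.2, $F_1^*(x) \ll d_1^{-1/2} x^{-1/2}$, similarly for $F_3$ with $d_3 = \a{\l_3-\l_4}$. The remaining $\xt{1}{2}$ or $\xt{2}{3}$ factor forces $\a{\mu_1 - \l_2}^{-1/2}$ or $\a{\l_3 - \mu_3}^{-1/2}$ type weights, and applying the rearrangement inequality followed by the logarithmic integral estimate (\ref{eq:logaverage}) — specifically $\int_?^? x^{-1}\log'(\cdot)\,dx \ll \log\frac{b}{a}\log'(\cdot)$ — produces the factor $\log'\frac{\norm{\l}}{1+d'} \asymp L_4(\l)$. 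So the upper bound reads $J_4(\l) \ll \norm{\l}^{-3} L_4(\l)$.

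For the \emph{lower bound}, I would restrict the integral defining $J_4(\l)$ (Equation (\ref{eq:main})) to an explicit region: take $\mu_2$ in an interval of length $\asymp d$ centered near $-d$ so that $F_2(\mu_2) \gg \norm{\l}^{-1}$, $(1+\norm{\mu})^{-2} \gg \norm{\l}^{-2}$, and $L_3(\mu) \gg 1$ there; then take $\mu_1 \in [\l_1,\l_2]$ and $\mu_3 \in [\l_3,\l_4]$ ranging over dyadic sub-intervals, keeping track of the constraint $\a{\sum_i \mu_i} < 1$ which, since $\mu_1 + \mu_3 \approx 0$ is automatically near-satisfied up to a window of size $\asymp d'$, is not too costly. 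On this region $\xt{i}{j} \gg 1$ for the bounded pairs and the surviving small $\xt{}{}$ factor is $\gg (\a{\mu_1-\l_2}/d)^{1/2}$-ish; integrating $F_1(\mu_1)F_3(\mu_3)$ times this weight over the region and summing the dyadic contributions reproduces $\log'\frac{\norm{\l}}{1+d'}$ from below, giving $J_4(\l) \gg \norm{\l}^{-3} L_4(\l)$. Combined with the upper bound this yields $J_4(\l) \asymp \norm{\l}^{-3} L_4(\l)$, and by the reductions already established ($I_n(\sqrt{n-1}\l) \ll J_n(\l) \ll I_n(\l/\sqrt{n})$ together with $\answer_n(C\l) \asymp \answer_n(\l)$) this gives Lemma \ref{lem:main} in this case.

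The main obstacle I anticipate is the bookkeeping around the constraint $\a{\sum_i \mu_i} < 1$ in the lower bound: because $d'$ can be anywhere from $O(1)$ up to $\asymp d$, the interaction between the width of this constraint window and the dyadic decomposition of the $\mu_1,\mu_3$ integrals needs care to ensure no logarithmic factor is lost or spuriously gained. A secondary subtlety is confirming that the surviving $\xt{i}{j}$ factor in (\ref{eq:main}) is the right one and is bounded below on the chosen region — this is where I would need to be most careful about which of $\xt{1}{2}, \xt{2}{3}$ carries the relevant $d'$-dependence, since the indices $i<j$ in (\ref{eq:main}) pair $\mu_i$ with $\l_i$ and $\l_{j+1}$ asymmetrically.
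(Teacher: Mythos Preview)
Your proposal has a genuine error that derails both directions of the argument: you claim $L_3(\mu) \asymp 1$ on $\mc{H}$, but this is false. Recall $L_3(\mu) = \log' \dfrac{\norm{\mu}}{1+2\a{\mu_2}}$, which depends on the \emph{middle} coordinate $\mu_2$, not on $\mu_1$ or $\mu_3$. In the case $(n,I)=(4,2)$, Equation (\ref{eq:onegapmu}) gives $\mu_1 \approx -d/2$ and $\mu_3 \approx d/2$, and the constraint $\a{\mu_1+\mu_2+\mu_3}<1$ then forces $\a{\mu_2} \ll 1+d'$. Hence $L_3(\mu) \asymp \log' \dfrac{\norm{\l}}{1+d'} \asymp L_4(\l)$ on $\mc{H}$, which is exactly the logarithm you are looking for --- it is supplied by the inductive hypothesis, not by any $\xt{i}{j}$.

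Because of this, your upper-bound sketch cannot work: bounding $L_3(\mu)\ll 1$ throws away the logarithm, and the $\xt{i}{j}$ are all $\leq 1$, so ``keeping'' one of them does not restore it; following your outline would yield $J_4(\l)\ll \norm{\l}^{-3}$, contradicting the lower bound you are trying to prove. Likewise your lower-bound sketch tries to extract the logarithm from a ``small $\xt{}{}$ factor'', but a factor that is $\leq 1$ can only hurt a lower bound. The paper instead keeps $L_3(\mu)=L(\mu_2)$ in the integral, writes
\[
J_4(\l)\ \ll\ \norm{\l}^{-3}\,\bigl\|L*F_1\bigr\|_\infty\,\norm{\1_{[-1,1]}}_1\,\norm{F_3}_1,
\]
and bounds $\norm{L*F_1}_\infty \leq \langle L^*,F_1^*\rangle \ll \log'\dfrac{\norm{\l}}{1+d_1}$ via rearrangement and (\ref{eq:logaverage}); symmetrically with $d_3$. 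For the lower bound it restricts to $\mc{H}'$ (where $\xt{i}{j}\gg 1$ and $\a{\mu_2}\leq 1+4d'$), so $L_3(\mu)\gg \log'\dfrac{\norm{\l}}{1+d'}$ there, and then invokes the convolution argument of Lemma \ref{lem:generallower}.
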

The proof proceeds similarly to Lemma \ref{lem:onegapA}, except that we cannot eliminate $L_{n-1}(\mu) = \log' \norm{\l}/(1+\a{\mu_2})$. We write instead
\[
J_n(\l) 
\ll 
\norm{\l}^{-n+2}
\int_{-\infty}^{\infty}
L(\mu_2)
G(\mu_2) \, d \mu_2
. 
\]
where
\begin{align*}
    L(t) &= \log' \f{\norm{\l}}{1 + \a{t}}
    \\
    G &= \1_{[-1,1]} * F_1 * F_3
.
\end{align*}
Using Young's inequality and the fact that $\norm{F_i}_1 \asymp 1$ we have
\begin{align*}
J_n(\l) 
&\ll 
\norm{\l}^{-n+1}
\norm{L* \1_{[-1,1]} * F_1 * F_3}_\infty 
\\ 
&\leq 
\norm{\l}^{-n+1}
\norm{L* F_1}_\infty \norm{\1_{[-1,1]}}_1 \norm{F_3}_1
\\ 
&\ll 
\norm{\l}^{-n+1}
\norm{L* F_1}_\infty 
.
\end{align*}
 Then using the Hardy-Littlewood rearrangement inequality and equation (\ref{eq:logaverage}):
\begin{align*}
    \norm{L* F_1}_\infty 
    &\leq \langle L^* , F^*_1 \rangle 
    \\
    &\ll 
    \int_0^{d_1} \log' \f{\norm{\l}}{1+\a{x}} 
    d_1^{-1/2} x^{-1/2}
    \, dx
    \\
    &\ll \log' \f{\norm{\l}}{1 + d_1}
    ,
\end{align*}
so $J_n(\l) \ll \norm{\l}^{-n+1} \f{\norm{\l}}{1 + d_1}$ and similarly $J_n(\l) \ll \norm{\l}^{-n+1}  \log' \f{\norm{\l}}{1 + d_3}$. Together these imply 
\[
J_n(\l) \ll \norm{\l}^{-n+1} \log' \f{\norm{\l}}{1 + \a{\l_1 - \l_2} + \a{\l_3 - \l_4}}
\]
as desired. 
For the lower bound, we restrict to $\mc{H}'$, then since $\xt{i}{j} \gg 1$ for all $i < j$, and $(1+\norm{\mu}^{-2} \gg \norm{\l}^{-2}$ we have
\begin{align*}
    J_n(\l) 
    \gg 
    \norm{\l}^{-2}
    \int_{\mc{H}'}
    \log' \f{\norm{\l}}{1 + \a{\mu_2}}
    \prod_{1 \leq i \leq 3} F_i (\mu_i) \, d\mu
    .
\end{align*}
By Equation (\ref{eq:onegapmu}) we have $\a{\mu_2} \leq 1 + 4d'$ for $\mu \in \mc{H}'$, so 
\begin{align*}
    J_n(\l) 
    \gg 
    \norm{\l}^{-2}
    \log' \f{\norm{\l}}{2 + 4d'}
    \int_{\mc{H}'}
    \prod_{1 \leq i \leq 3} F_i (\mu_i) \, d\mu 
\end{align*}
and using the same argument as in Lemma \ref{lem:generallower} the integral is $\gg \norm{\l}^{-1}$. Finally since $d' = \a{\l_1 - \l_2} + \a{\l_3 - \l_4}$ we have 
\begin{align*}
    J_n(\l) 
    \gg 
    \norm{\l}^{-3}
    \log' \f{\norm{\l}}{1 + \a{\l_1 - \l_2} + \a{\l_3 - \l_4}}
    .
\end{align*}
\subsection{Two Large Gaps}
In this section we assume no index $I$ satisfies $d_I > (1-1/100n)d$. This implies there exist indices $I < J$ such that $d_I, d_J \geq d/100n^2$.

\subsubsection{The case $(I,J) \neq (1,n-1)$}
\begin{lem}\label{lem:twogapA}
    Suppose there exist $I < J$ with $(I,J) \neq (1,n-1)$ such that $d_I, d_J \geq d/100n^2$. Then $L_n(\l) \asymp 1$ and $J_n(\l) \asymp \norm{\l}^{-n+1}$. 
\end{lem}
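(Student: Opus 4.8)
The plan is to follow the template already established in Lemma \ref{lem:onegapA}: since Lemma \ref{lem:generallower} gives $J_n(\l) \gg \norm{\l}^{-n+1}$ unconditionally, everything reduces to (i) checking $L_n(\l) \asymp 1$ and (ii) proving the matching upper bound $J_n(\l) \ll \norm{\l}^{-n+1}$. For step (i), the hypothesis $(I,J) \neq (1,n-1)$ with $d_I, d_J \geq d/100n^2$ means that at least one of the two largest gaps sits strictly between indices $2$ and $n-1$, or more precisely that the gap structure forces one of $|\l_2|, |\l_{n-1}|$ to be $\gg d$ (because the two big gaps partition the spectrum into three clusters of comparable spread, and unless the gaps are at the very ends one of $\l_2,\l_{n-1}$ is separated from the bulk). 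Concretely I would argue: the big gaps at $I$ and $J$ split $\{\l_1,\dots,\l_n\}$ into blocks; using $\sum \l_i = 0$ and $d_I, d_J \gg d$, compute the cluster centers up to $O(d/n^2)$ error as in \eqref{eq:onegaplam}, deduce $1 + |\l_2| + |\l_{n-1}| \gg d \asymp \norm{\l}$, hence $\log' \frac{\norm{\l}}{1+|\l_2|+|\l_{n-1}|} \ll 1$; for $n=4$ one separately checks the exceptional factor $\log'\frac{\norm{\l}}{1+|\l_1-\l_2|+|\l_3-\l_4|}$ is $O(1)$ because $(I,J)\ne(1,3)$ forces one of $d_1 = |\l_1-\l_2|$, $d_3 = |\l_3-\l_4|$ to be a large gap. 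So $L_n(\l) \asymp 1$.

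For step (ii), I would mimic the endgame of Lemma \ref{lem:onegapA}. First bound $\xt{i}{j} \le 1$ throughout $\mc{H}$ to get $J_n(\l) \ll \norm{\l}^{-n+2} \int_\mc{H} L_{n-1}(\mu) \prod_i F_i(\mu_i)\, d\mu$; then I need two gains of $\norm{\l}^{-1/2}$ each (or one gain of $\norm{\l}^{-1}$) from two of the $F_i$ factors, plus $L_{n-1}(\mu) \asymp 1$. The two big gaps $d_I, d_J \gg d$ mean that $F_I$ and $F_J$ are each supported on an interval of length $\asymp \norm{\l}$, and since $\int F_i \asymp 1$ and the rearrangements satisfy $F_i^*(x) \ll d_i^{-1/2} x^{-1/2} \ll \norm{\l}^{-1/2} x^{-1/2}$, integrating out $\mu_I$ and $\mu_J$ against the remaining convolution (via Young / Hardy–Littlewood rearrangement as in Lemma \ref{lem:onegapB}) costs a factor $\ll \norm{\l}^{-1/2}$ each rather than $O(1)$. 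For $L_{n-1}(\mu) \asymp 1$: on $\mc{H}$ we have $\mu_i \in [\l_i,\l_{i+1}]$, so $\mu$ inherits a cluster structure with two large gaps at positions $I$ and either $J$ or $J-1$; since $(I,J)\ne(1,n-1)$, the corresponding gap indices for the $(n-1)$-tuple $\mu$ are not $(1,n-2)$, so one of $|\mu_2|,|\mu_{n-2}|$ is $\gg d$, killing the main log; for small $n$ (i.e. $n=5$, so $n-1=4$) one also checks the exceptional factor of $L_4(\mu)$ is $O(1)$ by the same end-gap argument. Then $J_n(\l) \ll \norm{\l}^{-n+2} \cdot \norm{\l}^{-1} \cdot \prod_{i \neq I,J}\norm{F_i}_1 \ll \norm{\l}^{-n+1}$, combining with $\norm{\l}^{-1/2}\cdot\norm{\l}^{-1/2}$ — I should be careful to organize the bookkeeping so that exactly the right power is extracted. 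Combined with the lower bound from Lemma \ref{lem:generallower}, this gives $J_n(\l) \asymp \norm{\l}^{-n+1}$.

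The main obstacle I anticipate is the case analysis needed to confirm $L_{n-1}(\mu) \asymp 1$ robustly: the gap at index $J$ for $\l$ becomes a gap at index $J$ or $J-1$ for $\mu$ depending on whether $\mu_J$ has been "absorbed" toward $\l_J$ or $\l_{J+1}$, and one must verify that in every sub-case the relevant coordinates $\mu_2, \mu_{n-2}$ (and, when $n=5$, the differences $\mu_1-\mu_2, \mu_3-\mu_4$) are genuinely of size $\gg d$ and not accidentally small. This is the kind of index-chasing that is easy to get wrong; I would handle it by the same $\pm O(d/n^2)$ cluster-center computation used in \eqref{eq:onegapmu}, applied to whichever of the three clusters $\mu_2$ or $\mu_{n-2}$ lands in, rather than trying to do it uniformly. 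Everything else (the rearrangement estimates, $\int F_i \asymp 1$, the final convolution-at-$0$ bound) is routine and already packaged in the lemmas above.
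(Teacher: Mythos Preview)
Your step (i) is fine and matches the paper. The gap is in step (ii): the claim $L_{n-1}(\mu)\asymp 1$ on $\mc H$ is \emph{false}. Your argument that ``the gap indices for $\mu$ are not $(1,n-2)$'' does not hold, because $\mu_I\in[\l_I,\l_{I+1}]$ and $\mu_J\in[\l_J,\l_{J+1}]$ each range over an interval of length $\asymp d$, so the gap pattern of $\mu$ is not inherited from that of $\l$. For a concrete failure take $n=6$, $(I,J)=(2,4)$, $\l = T\cdot(-3/2,\,-1,\,-\eps,\,\eps,\,1,\,3/2)$ with $T$ large and $\eps$ small: then $\mu=T\cdot(-6/5,0,0,0,6/5)$ lies in $\mc H$, its large gaps sit at positions $1$ and $4=n-2$, and $L_{n-1}(\mu)=L_5(\mu)\gg(\log T)^3$. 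So you cannot drop $L_{n-1}(\mu)$ from the integrand. Relatedly, the ``two gains of $\norm\l^{-1/2}$ from $F_I,F_J$'' heuristic does not produce a clean $\norm\l^{-1}$: even without the log from $L_{n-1}$, the rearrangement bound on $F_I*F_J$ (Lemma~\ref{lem:2dsmall}) already carries a factor $\log'(d/|\,\cdot\,|)$, and pairing $F_I$ with $\1_{[-1,1]}$ only yields one factor of $\norm\l^{-1/2}$ with nothing left to pair $F_J$ against.

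The paper's route is genuinely more delicate. It does \emph{not} discard the $\kappa$'s: after reducing by symmetry to $J\le n-2$, it extracts the full $\norm\l^{-1}$ from $\xt{I}{J}\,\xt{J}{n-1}$ (using $|\l_{J+1}-\mu_I|,|\l_J-\mu_{n-1}|\ge d_J\gg\norm\l$), retaining residual factors $|\mu_I-\mu_J|^{1/2}|\l_n-\mu_J|^{1/2}$ in the integrand. It then bounds $L_{n-1}(\mu)$ by an explicit product of logs $\mc L(\mu_J)$ depending only on $\mu_J$ (a short case analysis in $I,J,n$), feeds the resulting $2$-variable integral in $(\mu_I,\mu_J)$ into Lemma~\ref{lem:2d}, and finally kills the leftover log terms by a dichotomy: if $d'=d-d_I-d_J\ge d/(100n)$ one smooths against $F_K$ for the third-largest gap; if $d'<d/(100n)$ a support argument on $G$ forces the log arguments to be $\gg d$. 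Your plan would need all of these ingredients to close.
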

\begin{proof}
If $\a{\l_2} + \a{\l_{n-1}} \leq d/100n^2$, then $d_i < d/100n^2$ for $2 \leq i \leq n-2$, forcing $(I,J) = (1,n-1)$. Hence $\a{\l_2} + \a{\l_{n-1}} \gg \norm{\l}$ and the main term in $L_n(\l)$ is $\ll 1$. The exceptional term of $L_n(\l)$ is also $\ll 1$ since one of $\a{\l_1 - \l_2}, \a{\l_3 - \l_4}$ must be $d_I$ or $d_J$. It remains to prove $J_n(\l) \ll \norm{\l}^{-n+1}$. 

If $n = 3$ then $(I,J) = (1,2)$ is forced, so we must have $n \geq 4$. By symmetry (replacing $\l_1,\dots,\l_n$ with $-\l_n, \dots, -\l_1$) we may assume $J \leq n-2$. 

First we rewrite $L_{n-1}(\mu)$ to depend solely on $\mu_J$: 
\begin{lem}
Under the assumptions of Lemma \ref{lem:twogapA},
\[
L_{n-1}(\mu) 
\ll 
\mathcal{L}(\mu_J) := 
\left( 
\log'
\f{\norm{\l}}{
\a{\l_J - \mu_J}
}
\right) 
\left( 
\log'
\f{\norm{\l}}{
\a{\l_{J+1} - \mu_J}
}
\right)^{n-2}
\left( 
\log'
\f{\norm{\l}}{
\a{\mu_J}
}
\right)^{n-3}
.
\]
\end{lem}
\begin{proof}

If $J = n-2$ then bound
\[
\left(
\log' 
\f{\norm{\mu}}{
1 + \a{\mu_2} + \a{\mu_{n-2}}
}
\right)^{n-3}
\ll 
\left(
\log' 
\f{\norm{\l}}{
\a{\mu_J}
}
\right)^{n-3}
.
\]
Otherwise, suppose $J \leq n-3$. Since $1 \leq I < J$ we must have $n \geq 5$. If $(I,J) = (1,2)$ then $\a{\mu_2} + \a{\mu_{n-2}} \geq \a{\mu_2 - \mu_3} \geq \a{\mu_2 - \l_3} = \a{\l_{J+1} - \mu_J}$ and
\[
\left(
\log' 
\f{\norm{\mu}}{
1 + \a{\mu_2} + \a{\mu_{n-2}}
}
\right)^{n-3}
\ll 
\log' 
\left(
\f{\norm{\l}}{
\a{\l_{J+1} - \mu_J}
}
\right)^{n-3}
.
\]
Otherwise, if $(I,J) \neq (1,2)$ then $n \geq 6$ is forced, and $\a{\mu_2} + \a{\mu_{n-2}} \geq \a{\mu_2 - \mu_{n-2}} \geq \a{\l_3 - \l_{n-2}} \geq d_J \gg \norm{\l}$, so 
\[
\left(
\log' 
\f{\norm{\mu}}{
1 + \a{\mu_2} + \a{\mu_{n-2}}
}
\right)^{n-3}
\ll 1 
.
\]
As for the exceptional factor in $L_{n-1}(\mu)$ that appears when $n=5$, if $J=3$ we bound
\[
\log' 
\f{\norm{\mu}}{
1 + \a{\mu_1-\mu_2} + \a{\mu_3-\mu_4}
}
\ll 
\log' 
\f{\norm{\l}}{
\a{\l_{J+1} - \mu_J}
}
,
\]
and if $J=2$ we bound
\[
\ll 
\f{\norm{\l}}{
\a{\l_J - \mu_J}
}
.
\]
\end{proof}
Since  $\norm{\mu} \gg \a{\mu_1 - \mu_{n-1}} \gg d_J \gg \norm{\l}$, we have $(1+\norm{\mu})^{-n+2} \ll \norm{\l}^{-n+2}$. 

Since $\a{\l_{J+1} - \mu_I} \geq d_J \gg \norm{\lambda}$ we have 
\begin{align*}
\xt{I}{J} 
&= \a{\l_I - \mu_J}^{-1/2} \a{\l_{J+1} - \mu_I}^{-1/2} \a{\mu_I - \mu_J}
\\
&\ll \norm{\l}^{-1/2} \a{\mu_I - \mu_J}^{1/2}
,
\end{align*}
and since $\a{\l_J - \mu_{n-1}} \geq d_J \gg \norm{\lambda}$ we have
\begin{align*}
\xt{J}{n-1} 
&= 
\a{\l_J - \mu_{n-1}}^{-1/2} \a{\l_n - \mu_J}^{-1/2} \a{\mu_J - \mu_{n-1}}
\\ 
&\ll 
\norm{\l}^{-1/2} \a{\mu_J - \mu_{n-1}}^{1/2}
\\
&\ll
\norm{\l}^{-1/2} \a{\l_n - \mu_J}^{1/2}
.
\end{align*}
Applying all the above inequalities, and $\xt{i}{j} \leq 1$ for all other pairs $i,j$, to Equation (\ref{eq:main}) and converting to a convolution we have 
\[
J_n(\l) \ll 
\norm{\l}^{-n+1} 
\int
F(t) G(-t)
\, dt
\]
where 
\begin{align*}
     F(t) &= 
     \int_{\mu_I + \mu_J = t} 
     F_I(\mu_I) F_J(\mu_J) 
     \abs{\mu_I - \mu_J}^{1/2}
     \abs{\mu_J - \l_n}^{1/2}
     \mathcal{L}(\mu_J)
     \, 
     d\mu_J
     \\ 
     G(s) &= \left( \1_{[-1,1]} * \left( \bigast_{i \neq I, J} F_i \right) \right)(s)
     .
\end{align*}
 The estimation of $F(t)$ is contained in Lemma \ref{lem:2d}, with 
\begin{align*}
    &A = \l_I&
    &B = \l_{I+1}&
    &C = \l_J& 
    &D = \l_{J+1}&
    &E = \l_n&
    &T = d,&
    &k = 2n-4&
\end{align*}
giving 
\begin{align*}
    F(t) \ll 
    \left( \log' \f{\a{\l_J - \l_{I+1}}}{\a{\l_{I+1} + \l_J -t}} \right)^{k+1}  + 
    \left( \log' \f{\a{\l_n - \l_{J+1}}}{\a{\l_I + \l_{J+1}-t}} \right)^{k+1}  + 
    \\
    \left( \log' \f{d}{\a{\l_I + \l_J -t}} \right)^{k} + 
    \left( \log' \f{d}{\a{\l_{I+1} + \l_{J+1} -t}}\right)^{k} 
    .
\end{align*}
Let $d' = d - d_I - d_J$. Let $H_1(t)$ be the sum of the first two log terms and $H_2(t)$ be the sum of the second two log terms. We have $\a{\l_J - \l_{I+1}}, \a{\l_n - \l_{J+1}} < d'$ so $H^*_1(t) \ll (\log' \f{d'}{t})^{k+1}$. Let $d_K$ be the next largest gap after $d_I$ and $d_J$, we have $d_K \geq d'/(n-3)$. Then  
\begin{align*}
    \int H_1(t) G(-t)\, dt 
    &\ll \norm{ H_1 * F_K }_\infty \prod_{i \neq I, J, K} \norm{F_i}_1
    \\
    &\ll \langle H^*_1, F^*_K \rangle 
    \\ 
    &\ll  \int_0^{d_K} \left( \log' \f{d'}{x} \right)^{k+1} d_K^{-1/2} x^{-1/2} \, dx 
    \\
    &\ll \left(\log' \f{d'}{d_K} \right)^{k+1}
    \\
    &\ll 1 
    ,
\end{align*} 
as desired. To bound $\int H_2(t) G(-t) dt$ there are two cases. First suppose $d' \geq d/(100n)$. Then 
\begin{align*}
    \int H_2(t) G(-t) \, dt 
    &\ll \langle H^*_2 , F^*_K \rangle 
    \\
    &\ll \int_0^{d_K} \left(
    \log' \frac{d}{x} 
    \right)^{k} d^{-1/2}_K x^{-1/2} \, dx 
    \\
    &\ll \left( \log' \f{d}{d_K} \right)^{k}
    \\
    &\ll 1 
    ,
\end{align*}
since $d_K \gg d$. Otherwise $d' < d/(100n)$. 
If $G(-t) \neq 0$ then  
\begin{align*}
    t 
    &\in -\supp G 
    \\
    &\subset [-1,1] + \sum_{\substack{1 \leq i \leq n-1 \\ i \neq I,J}} [-\l_{i+1}, -\l_i]
    \\
    &= [-1 + \l_1 + \l_{I+1} + \l_{J+1}, \, \, 1 + \l_n + \l_I + \l_J].
\end{align*}
Recall that $\l_1 < -d/n$. Subtracting $\l_I + \l_J$ from the lower bound for $t$ we get 
\begin{align*}
    t - \l_I - \l_J &\geq -1 + \l_1 + d_I + d_J 
    \\
    &\geq -1 - d/n + d_I + d_J 
    \\
    &\geq -1 + (1 - 1/100n -1/n)d
\end{align*}
so for sufficiently large $\l$, $\a{t - \l_I - \l_J} \gg \norm{\l}$, and similarly $\a{t - \l_{I+1} + \l_{J+1}} \gg \norm{\l}$. Hence $H_2(t) \ll 1$ on the support of $G(-t)$ and $\int H_2(t)G(-t) \, dt \ll 1$ as desired. 
\end{proof}

\subsubsection{The case $(I,J) = (1,n-1)$} 
\begin{lem}\label{lem:twogapB}
    Let $(I,J) = (1,n-1)$ and suppose $d_I, d_J \geq d/100n^2$. Then 
    \[
    L_n(\l) \asymp \left(
    \frac{\norm{\l}}{
    1+\a{\l_2}+\a{\l_{n-1}}
    }
    \right)^{n-2}
    ,
    \]
    and $J_n(\l) \asymp \norm{\l}^{-n+1} L_n(\l)$.     
\end{lem}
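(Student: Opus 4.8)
The plan is to follow the pattern of Lemmas~\ref{lem:onegapA}--\ref{lem:twogapA}: first identify the size of $L_n(\l)$, then sandwich $J_n(\l)$ between constant multiples of $\norm{\l}^{-n+1}L_n(\l)$ using the formula for $J_n(\l)$. Write $P := 1 + \a{\l_2} + \a{\l_{n-1}}$ and recall $\norm{\l}\asymp d$. For $L_n$ there is nothing to do when $n\neq 4$, since the displayed expression is then literally $L_n$ restricted to the chamber; for $n=4$ the exceptional factor $\log'\frac{\norm{\l}}{1+\a{\l_1-\l_2}+\a{\l_3-\l_4}}$ is $\asymp 1$ because $\a{\l_1-\l_2}=d_1$ and $\a{\l_3-\l_4}=d_3$ are both $\gg d$, and may be dropped. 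Geometrically, $(I,J)=(1,n-1)$ together with the hypothesis that no gap is as large as $(1-1/100n)d$ forces $\l_2,\dots,\l_{n-1}$ into a central cluster of diameter $\l_{n-1}-\l_2\leq P$, with $\l_1,\l_n$ at distance $d_1,d_{n-1}\asymp d$ below and above it; since $\sum_i\l_i=0$ gives $\a{\l_1+\l_n}\ll P$, we have $\l_1\approx-d/2$ and $\l_n\approx d/2$ up to $O(P)$. (If $P\gtrsim d$ then $L_n(\l)\asymp 1$ and the argument is that of Lemmas~\ref{lem:onegapA} and~\ref{lem:twogapA}; I will assume $P\ll d$.) In $J_n(\l)=\int_{\mc{H}}(1+\norm{\mu})^{-n+2}L_{n-1}(\mu)\prod_{i<j}\xt{i}{j}\prod_i F_i(\mu_i)\,d\mu$ the variables $\mu_1\in(\l_1,\l_2)$ and $\mu_{n-1}\in(\l_{n-1},\l_n)$ range over intervals of length $\asymp d$, the variables $\mu_2,\dots,\mu_{n-2}$ stay in the cluster, and $\a{\sum_i\mu_i}<1$ ties $\mu_1+\mu_{n-1}=O(P)$. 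I would parametrize by $u:=\l_2-\mu_1$, noting that in the main range $u\gg P$ one has $\norm{\mu}\asymp u$.

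For the lower bound I would restrict $\mu$ to a union of dyadic slabs. For each dyadic $\Sigma$ with $P\leq\Sigma\leq d_1/2$, take $u\in[\Sigma,2\Sigma]$, place $\mu_2,\dots,\mu_{n-2}$ in the middle thirds of their boxes exactly as in Lemma~\ref{lem:generallower} (making all cluster $\xt{i}{j}\gg 1$ and $\prod_{i=2}^{n-2}F_i(\mu_i)\gg\prod_{i=2}^{n-2}d_i^{-1}$), and let the trace constraint cut out a window of length $\asymp 1$ for $\mu_{n-1}$. On such a slab $\norm{\mu}\asymp\Sigma$, $L_{n-1}(\mu)\gg(\log'\frac{\Sigma}{P})^{n-3}$, $F_1(\mu_1)\asymp F_{n-1}(\mu_{n-1})\asymp d^{-1/2}\Sigma^{-1/2}$, and each of the $2n-5$ factors $\xt{i}{j}$ whose index set meets $\{1,n-1\}$ is $\asymp(\Sigma/d)^{1/2}$ (respectively $\asymp\Sigma/d$ for the pair $(1,n-1)$), so together they are $\asymp(\Sigma/d)^{n-2}$. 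Multiplying $\Sigma^{-n+2}$ (from $(1+\norm{\mu})^{-n+2}$), $(\Sigma/d)^{n-2}$ (from the boundary $\xt{i}{j}$), $d^{-1/2}\Sigma^{1/2}$ (from $\int_{\text{slab}}F_1\,d\mu_1$), $d^{-1/2}\Sigma^{-1/2}$ (from $F_{n-1}$ on the window), and $\gg 1$ (from the cluster integral), the $\Sigma$-powers cancel and each slab contributes $\gg d^{-n+1}(\log'\frac{\Sigma}{P})^{n-3}$; summing over the $\asymp\log'\frac{d}{P}$ admissible dyadic $\Sigma$ gives $J_n(\l)\gg d^{-n+1}(\log'\frac{d}{P})^{n-2}\asymp d^{-n+1}L_n(\l)$.

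For the upper bound I would run this cancellation in reverse. Using the separation $\a{\l_1-\mu_j}\geq d_1\gg d$ and $\a{\l_n-\mu_j}\geq d_{n-1}\gg d$ for $2\leq j\leq n-1$, together with the order relations $\mu_1<\l_2\leq\mu_j<\l_{j+1}$ and $\mu_j<\l_{n-1}<\mu_{n-1}<\l_n$ (which give $\a{\mu_1-\mu_j}\leq\a{\l_{j+1}-\mu_1}$, $\a{\mu_j-\mu_{n-1}}\leq\a{\l_j-\mu_{n-1}}$), one bounds $\xt{1}{j}\ll d^{-1/2}\a{\mu_1-\mu_j}^{1/2}$, $\xt{j}{n-1}\ll d^{-1/2}\a{\mu_j-\mu_{n-1}}^{1/2}$, $\xt{1}{n-1}\ll d^{-1}\a{\mu_1-\mu_{n-1}}$; in the main range $u\gg P$ all of $\a{\mu_1-\mu_j},\a{\mu_j-\mu_{n-1}},\a{\mu_1-\mu_{n-1}}\asymp u$, so the $2n-5$ boundary factors multiply to $\ll d^{-(n-2)}u^{n-2}$ and absorb $(1+\norm{\mu})^{-n+2}\asymp u^{-n+2}$, leaving a clean factor $d^{-(n-2)}$; I would bound the remaining cluster $\xt{i}{j}\leq 1$. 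It then suffices to show $\int_{\mc{H}}L_{n-1}(\mu)\prod_{i=1}^{n-1}F_i(\mu_i)\,d\mu\ll d^{-1}(\log'\frac{d}{P})^{n-2}$. For this, freeze $u$: $F_1(\mu_1)\,d\mu_1\asymp d^{-1/2}u^{-1/2}\,du$, integrating out the cluster variables against $L_{n-1}(\mu)\prod_{i=2}^{n-2}F_i$ yields $\asymp(\log'\frac{u+P}{P})^{n-3}$, and the trace constraint leaves $\mu_{n-1}$ a window of length $\asymp 1$ on which $\int F_{n-1}\,d\mu_{n-1}\ll d^{-1/2}(u+P)^{-1/2}$; since $u^{-1/2}(u+P)^{-1/2}\asymp u^{-1}$ for $u\gg P$, the resulting one-dimensional integral $d^{-1}\int_P^d u^{-1}(\log'\frac{u}{P})^{n-3}\,du$ is $\ll d^{-1}(\log'\frac{d}{P})^{n-2}$ by~(\ref{eq:logaverage}) and a change of variables, while the range $u\lesssim P$ (where $\mu_1$ has entered the cluster and $\norm{\mu}\ll P$) contributes $\ll d^{-1}$ once one keeps $(1+\norm{\mu})^{-n+2}$ and the singularities of the $F_i$ sharp.

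I expect the main obstacle to be the cluster integral $\int_{\text{cluster}}L_{n-1}(\mu)\prod_{i=2}^{n-2}F_i(\mu_i)\,d\mu\ll(\log'\frac{u+P}{P})^{n-3}$: the naive bound $L_{n-1}(\mu)\leq(\log'\norm{\mu})^{n-3}$ is too lossy when $P$ lies within sub-polynomial factors of $d$, so one must genuinely use that the $F_2F_{n-2}$-mass of $\{1+\a{\mu_2}+\a{\mu_{n-2}}\leq s\}$ is $\ll s/P$ (equivalently, apply monotone rearrangement and the log-averaging inequality~(\ref{eq:logaverage}) to the $\mu_2,\mu_{n-2}$ integrations jointly rather than pointwise). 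Coupled to this are the $n$-dependent bookkeeping of which inequality to apply to each of the $2n-5$ boundary factors $\xt{i}{j}$ so that the $u$-powers cancel exactly, the transition region $u\asymp P$ and the corner where several of $\mu_1,\mu_{n-1}$ and the cluster variables are simultaneously near $0$ (where the clean cancellation degrades), the exceptional logarithm of $L_4$ — appearing as $\asymp 1$ when $n=4$, and when $n=5$ as a function of $\mu$ that is again $\asymp 1$ on the main range $u\gg P$ — and the degenerate regime $P\gtrsim d$ with $L_n\asymp 1$.
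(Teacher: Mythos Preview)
Your proposal shares the paper's key cancellation --- the $2n-5$ ``boundary'' factors $\xt{1}{j},\xt{j}{n-1},\xt{1}{n-1}$ absorb $(1+\norm{\mu})^{-n+2}$ and leave a clean $\norm{\l}^{-n+2}$ --- and the paper makes this global via $\xt{1}{j}\xt{j}{n-1}\ll\norm{\l}^{-1}\a{\mu_1-\mu_{n-1}}$ together with $\norm{\mu}\gg\a{\mu_1-\mu_{n-1}}$, rather than restricting to a main range. Your dyadic lower bound is a genuinely different organisation from the paper's: the paper restricts to the single top slab $\mu_1<-d/4,\ \mu_{n-1}>d/4$, bounds $L_{n-1}(\mu)\gg(\log'\tfrac{\norm{\l}}{P})^{n-3}$ there as a constant, and then extracts the remaining logarithm from the one--dimensional integral $\int_{\mu_1+\mu_{n-1}=t}\a{\l_1-\mu_1}^{-1/2}\a{\l_n-\mu_{n-1}}^{-1/2}d\mu_1\gg\log'\tfrac{\norm{\l}}{\a{t-(\l_1+\l_n)}}$ via Lemma~\ref{lem:1dsmall}. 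Your dyadic sum recovers the same $(\log')^{n-2}$ by summing $(\log'\tfrac{\Sigma}{P})^{n-3}$ over $\log'\tfrac{d}{P}$ scales; both work.

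There is however a real gap in your upper bound. The pointwise claims $F_1(\mu_1)\,d\mu_1\asymp d^{-1/2}u^{-1/2}\,du$ and $\int_{\text{window}}F_{n-1}\ll d^{-1/2}(u+P)^{-1/2}$ both fail near $u=d_1$: there $\mu_1\to\l_1$ forces (through the trace constraint) $\mu_{n-1}\to\l_n$, so $F_1\asymp d^{-1/2}\a{\l_1-\mu_1}^{-1/2}$ and $\int_{\text{window}}F_{n-1}\asymp d^{-1/2}$ rather than $d^{-1}$. Your $u$--parametrisation sees the singular pair $(\l_2,\l_{n-1})$ but not $(\l_1,\l_n)$, and the latter contributes a second $d^{-1}\log'\tfrac{d}{P}$ of the same size. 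The paper sidesteps this by packaging $\mu_1,\mu_{n-1}$ together: Lemma~\ref{lem:2dsmall} gives the two--term bound
\[
\int_{\mu_1+\mu_{n-1}=t}F_1F_{n-1}\,d\mu_1\ \ll\ \norm{\l}^{-1}\Big(\log'\tfrac{\norm{\l}}{\a{\l_2+\l_{n-1}-t}}+\log'\tfrac{\norm{\l}}{\a{\l_1+\l_n-t}}\Big),
\]
which captures both pairs symmetrically. The ``main obstacle'' you flag --- controlling $\int L_{n-1}(\mu)\prod_{i=2}^{n-2}F_i$ uniformly in the cluster geometry --- is exactly what forces the paper's three--case split on $\l_{n-1}$ and $\a{\l_2-\l_{n-1}}$; your mass heuristic ``$F_2F_{n-2}$--mass of $\{1+\a{\mu_2}+\a{\mu_{n-2}}\le s\}\ll s/P$'' is not literally true when the cluster is off--centre (e.g.\ $\l_2$ close to $0$ but $\l_{n-1}\asymp P$), and making your uniform approach work would in practice require a comparable case analysis.
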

\begin{proof}
Clearly the exceptional term in $L_4(\l)$ is trivial when $d_1, d_3 \gg \norm{\l}$. We split into three cases: the upper bound for $n=3$, the upper bound for $n \geq 4$, and the lower bound. 
\subsubsection{The upper bound when $n=3$}
Since $\a{\l_1-\l_2},\a{\l_2-\l_3} \gg \norm{\l}$ we have
\begin{align*}
(1+\norm{\mu})^{-1} \xt{1}{2}
&\ll
\a{\l_1-\mu_2}^{-1/2}
\a{\l_3-\mu_1}^{-1/2}
\\
&\ll
\a{\l_1-\l_2}^{-1/2}
\a{\l_3-\l_2}^{-1/2}
\\
&\ll \norm{\l}^{-1}
.
\end{align*}
Then 
\begin{align*}
J_n(\l) 
\ll 
\norm{\l}^{-1}
\int_{-1}^1 \int_{\substack{
\mu_1 + \mu_2 = t
\\ 
\l_1 \leq \mu_1 \leq \l_2 \leq \mu_2 \leq \l_3 
}}
\a{\l_1 - \mu_1}^{-1/2}
\a{\l_2 - \mu_1}^{-1/2}
\a{\l_2 - \mu_2}^{-1/2}
\a{\l_3 - \mu_2}^{-1/2}
\, d\mu_1 \, dt 
.
\end{align*}
By Lemma \ref{lem:2dsmall} the inner integral is 
\[
\ll \norm{\l}^{-1} 
\left( 
\log' \f{\norm{\l}}{\a{2\l_2 - t}}
+
\log' \f{\norm{\l}}{\a{\l_1 + \l_3 - t}}
\right)
= \norm{\l}^{-1} 
\left( 
\log' \f{\norm{\l}}{\a{2\l_2 - t}}
+
\log' \f{\norm{\l}}{\a{\l_2 + t}}
\right)
,
\]
and 
\begin{align*}
    J_n(\l) &\ll 
    \norm{\l}^{-2} \int_{-1}^1 
    \log' \f{\norm{\l}}{\a{\l_2 + t}} 
    +
   \log' \f{\norm{\l}}{\a{2\l_2 - t}} 
    \, dt 
    \\
    &\ll \norm{\l}^{-2} \log' \f{\norm{\l}}{1 + \a{\l_2}}
\end{align*}
as desired.

For $2 \leq i \leq n-2$ we have 
\begin{align*}
    \xt{1}{i}
    &= \a{\mu_1 - \mu_i}
    \a{\l_{i+1} - \mu_1}^{-1/2}
    \a{\l_1 - \mu_i}^{-1/2}
    \\
    &\ll \a{\mu_1 - \mu_i}
    \a{\l_{i+1} - \mu_1}^{-1/2}
    \norm{\l}^{-1/2}
    \\
    &\ll \a{\mu_1 - \mu_i}^{1/2}
    \norm{\l}^{-1/2}
\end{align*}
and similarly $\xt{i}{n} \ll \a{\mu_i - \mu_n}^{1/2} \norm{\l}^{-1/2}$ so by AM-GM, $\xt{1}{i}\xt{i}{n} \ll \a{\mu_1 - \mu_{n-1}} \norm{\l}^{-1}$. Also,
$\xt{1}{n} \ll \a{\mu_1 - \mu_{n-1}} \norm{\l}^{-1}$.
We can use these factors to cancel $(1+\norm{\mu})^{-n+2}$, and the extra factor in $L_{n-1}(\mu)$ if $n=5$:
\begin{align*}
    \left(
    \log'
    \f{\norm{\mu}}
    {1+ \a{\mu_1-\mu_2} + \a{\mu_3-\mu_4}}
    \right)
    &(1+\norm{\mu})^{-n+2}
    \xt{1}{n-1}
    \prod_{2 \leq i \leq n-2} \xt{1}{i} \xt{i}{n-1}
    \\
    &\ll
    \log'
    \f{\norm{\mu}}{
    \a{\mu_1-\mu_2}
    } 
    \norm{\mu}^{-1/2} 
    \a{\mu_1 - \mu_2}^{1/2} 
    \norm{\l}^{-n+2} 
    \\
    &\ll 
    \norm{\l}^{-n+2} 
    ,
\end{align*}
where in the last line we use the fact that $x \log(1/x) \leq 1$ for $x \leq 1$. Bound $\xt{i}{j} \leq 1$ for all remaining $i,j$. 

Bound 
\begin{align*}
    \left(
    \log' \f{\norm{\mu}}
    {1 + \a{\mu_2} + \a{\mu_{n-2}}}
    \right)^{n-3}  
    &\ll 
    \left(
    \log' \f{\norm{\mu}}
    {1+\a{\mu_{n-2}}}
    \right)^{n-3}  
    .
\end{align*}
Write
\begin{align*}
    J_n(\l) 
    \ll 
    \norm{\l}^{-n+2}
    \int  F(t) G(-t) \, dt 
\end{align*}
where 
\begin{align*}
    F(t) &= 
     \int_{\substack{\mu_1 + \mu_{n-1} = t
     \\ \l_1 \leq \mu_1 \leq \l_2 
     \\ \l_{n-1} \leq \mu_{n-1} \leq \l_n 
     }}
     F_1(\mu_1) F_{n-1}(\mu_{n-1}) 
     \, d\mu_1
     \\
     G &= \1_{[-1,1]} * F^L_{n-2} *  \left(
     \bigast_{i \neq 1,n-2, n-1} F_i 
     \right)
     \\
     F^L_{n-2}(x) &= F_{n-2}(x)     
     \left(
    \log' \f{\norm{\mu}}
    {1+\a{x}}
    \right)^{n-3} 
    .
\end{align*}
Applying Lemma \ref{lem:2dsmall} and using $d_1, d_{n-1} \gg \norm{\l}$ gives 
\begin{align*}
    F(t) \ll 
    \norm{\l}^{-1} 
    \left( 
    \log' \f{\norm{\l}}{
    \a{\l_1 + \l_n - t}
    }
    +
    \log' \f{\norm{\l}}{
    \a{\l_2 + \l_{n-1} - t}
    }
    \right)
    .
\end{align*}
By symmetry (replacing $\l_1, \dots, \l_n$ with $-\l_n, \dots, -\l_1$) we may assume $\a{\l_{n-1}} \geq \a{\l_2}$, which also implies $\l_{n-1} \geq 0$. 
Now we split into three cases:
\begin{itemize}
    \item If $\l_{n-1} \leq 100$ then bound 
    \begin{align*}
        F^L_{n-2}(x) &\ll (\log' \norm{\l})^{n-3} F_{n-2}(x) 
    \end{align*}
    and
    \begin{align*}
    \int  F(t) G(-t) \, dt 
    &\ll 
    (\log' \norm{\l})^{n-3}
    \norm{F * \1_{[-1,1]}}_\infty 
    \prod_{i \neq 1,n-1} \norm{F_i}_1 
    \\
    &\ll
    \norm{\l}^{-1}
    (\log' \norm{\l})^{n-3}
    \int_0^2 
    \log' \f{\norm{\l}}{x}
    \, dx 
    \\ 
    &\ll 
    \norm{\l}^{-1} 
    (\log' \norm{\l})^{n-2}
    \\
    &\asymp 
    \norm{\l}^{-1}
    \left( \log' \f{\norm{\l}}{1+\a{\l_2} + \a{\l_{n-1}}}
    \right)^{n-2}
    \end{align*}
    and we are done. 
    \item Suppose $\l_{n-1} > 100$ and $\a{\l_2 - \l_{n-1}} \leq \a{\l_{n-1}}/100$. Then $\l_2, \dots, \l_{n-1} > 0$ and $\l_2 > 10$. Since $F^L_{n-2}$ is supported on $[\l_{n-2}, \l_{n-1}]$ we can bound
    \begin{align*}
        F^L_{n-2}(x) 
        &\ll 
        \left(
        \log' \f{\norm{\l}}{\a{\l_{n-2}}}
        \right)^{n-3} F_{n-2}(x) 
        \\
        &\ll 
        \left(
        \log' \f{\norm{\l}}{1 + \a{\l_2} + \a{\l_{n-1}}}
        \right)^{n-3} F_{n-2}(x) 
        .
    \end{align*}
    For $t$ satisfying $G(-t) \neq 0$ we have 
    \begin{align*}
        t &\in -\supp G 
        \\
        &\subset [-1 - \sum_{2 \leq i \leq n-2} \l_{i+1}, 1 - \sum_{2 \leq i \leq n-2} \l_i]
        \\
        &= [-1 + \l_1 + \l_2 + \l_n, 1 + \l_1 + \l_{n-1} + \l_n]
    \end{align*}
    so $t - \l_1 - \l_n \geq \l_2 - 1 \gg \a{\l_2} + \a{\l_{n-1}}$. On the other hand, 
    \begin{align*}
        \l_2 + \l_{n-1} - t 
        &\geq 
        \l_2 + \l_{n-1} - (1 + \l_1 + \l_{n-1} + \l_n) 
        \\
        &\geq \l_2 - 1 - \l_1 - \l_n 
        \\
        &\geq \l_2 - 1 + (\l_2 + \dots + \l_{n-1}) 
        \\
        &\geq (n-1)\l_2 - 1
        \\
        &\gg 1 + \a{\l_2} + \a{\l_{n-1}}
        .
    \end{align*}  
    Thus 
    \begin{align*}
        F(t) 
        &\ll 
        \norm{\l}^{-1} 
        \log' \f{
        \norm{\l}
        }{
        1 + \a{\l_2} + \a{\l_{n-1}}
        }
        ,
    \end{align*}
    and the result follows. 
    \item 
    Finally suppose $\a{\l_{n-1}} > 100$ and $\a{\l_2 - \l_{n-1}} \geq \a{\l_{n-1}}/100$. Then $\a{\l_2 - \l_{n-1}} \asymp 1 + \a{\l_2} + \a{\l_{n-1}}$. If $\l_{n-2} < \l_{n-1} / 2$, then $d_{n-2} > \a{\l_2 - \l_{n-1}}/4 \gg \a{\l_2} + \a{\l_{n-2}}$ and by monotone rearrangement, 
    \begin{align*}
    \int  F(t) G(-t) \, dt 
    &\ll 
    \norm{\l}^{-1}
    \norm{F * F^L_{n-2}}_\infty 
    \prod_{i \neq 1,n-2,n-1} \norm{F_i}_1 
    \\
    &\ll 
    \norm{\l}^{-1}
    \int_0^{d_{n-2}}
    d_{n-2}^{-1/2} x^{-1/2}
    \left( \log' \f{\norm{\l}}{x} \right)^{n-2} 
    \, dx 
    \\
    &\ll 
    \norm{\l}^{-1}
    \left( \log' \f{\norm{\l}}{d_{n-2}} \right)^{n-2}
    \\ 
    &\ll 
    \norm{\l}^{-1}
    \left( \log' \f{\norm{\l}}{1 + \a{\l_2} + \a{\l_{n-2}}} \right)^{n-2}
    \end{align*}
    and we are done. Otherwise if $\l_{n-2} > \l_{n-1}/2$ then 
    \begin{align*}
    F^L_{n-2}(x) 
    &\ll 
    \left(
    \log' \f{\norm{\l}}{\a{\l_{n-2}}}
    \right)^{n-3} F_{n-2}(x) 
    \\
    &\ll 
    \left(
    \log' \f{\norm{\l}}{1 + \a{\l_2} + \a{\l_{n-1}}}
    \right)^{n-3} F_{n-2}(x) 
    .
    \end{align*}
    Choose $2 \leq K \leq n-2$ such that $d_K$ is the next largest after $d_J$, then 
    \begin{align*}
        d_K &> (d-d_I-d_J)/(n-3) 
        \\
        &= \a{\l_2 - \l_{n-1}}/(n-3) 
        \\
        &\gg 1 + \a{\l_2} + \a{\l_{n-2}}
    \end{align*} and
    \begin{align*}
    \int  F(t) G(-t) \, dt 
    &\ll 
    \norm{\l}^{-1}
    \left(
    \log' \f{\norm{\l}}{1 + \a{\l_2} + \a{\l_{n-2}}}
    \right)^{n-3} 
    \norm{F * K}_\infty 
    \prod_{i \neq 1,K,n-1} \norm{F_i}_1 
    \\
    &\ll 
    \norm{\l}^{-1}
    \left(
    \log' \f{\norm{\l}}{1 + \a{\l_2} + \a{\l_{n-2}}}
    \right)^{n-3} 
    \int_0^{d_K}
    d_K^{-1/2} x^{-1/2}
    \log' \f{\norm{\l}}{x} 
    \, dx 
    \\
    &\ll 
    \norm{\l}^{-1}
    \left(
    \log' \f{\norm{\l}}{1 + \a{\l_2} + \a{\l_{n-2}}}
    \right)^{n-3} 
    \left( \log' \f{\norm{\l}}{d_K} \right)
    \\ 
    &\ll 
    \norm{\l}^{-1}
    \left( \log' \f{\norm{\l}}{1 + \a{\l_2} + \a{\l_{n-2}}} \right)^{n-2}
    .
    \end{align*}
\end{itemize}
\subsubsection{Lower bound when $n \geq 3$ and $(I,J) = (1,n-1)$}
By assuming $L_n(\l)$ is larger than some constant, we may assume $1 + \a{\l_2} + \a{\l_{n-1}} < d/(100n)$. This further implies $\a{\l_1 + \l_n} = \a{\l_2 + \dots + \l_{n-1}} \leq d/100$, and since $d = \l_n - \l_1$, we have $\l_1 = (\l_1 + \l_n - d)/2 \leq -d/2 + d/200 \leq -d/3$ and $\l_n \geq d/3$.

Similarly to Section \ref{sec:generallower}, we restrict the integral to the region 
\begin{align*}
    \mc{H}'' = \{ \mu \in \mc{H} : 
    \mu_i \in M_i \text{ for } 2 \leq i \leq n-2
    , \, \,
    \mu_1 < -d/4
    , \, \, 
    \mu_{n-1} > d/4
    \}
    ,
\end{align*}
on which we have $\xt{i}{j} \geq 1/4$ for all $i<j$, and $F_i \gg d_i^{-1} \1_{M_i}$ for $2 \leq i \leq n-2$. 
Since $\l_2 < \mu_2 < \mu_{n-2} < \l_{n-1}$ we have
\begin{align*}
    L_{n-1}(\mu) 
    \gg 
    \left(
    \log' \frac{\norm{\l}}{1 + \abs{\l_2} + \abs{\l_{n-1}}} \right)^{n-3}
    .
\end{align*}
Bound $\a{\l_2 - \mu_1}^{-1/2} \a{\l_{n-1} - \mu_{n-1}}^{-1/2} \gg \norm{\l}^{-1}$ and $(1+\norm{\mu})^{-n+2} \gg \norm{\l}^{-n+2}$. 
Using all the above inequalities gives 
\begin{align*}
    J_n(\l) \gg 
    &\norm{\l}^{-n+1}
    \left(
    \log' \frac{\norm{\l}}{1 + \abs{\l_2} + \abs{\l_{n-1}}} \right)^{n-3}
    \times 
    \\
    &\int_{\mc{H}''} 
    \a{\l_1-\mu_1}^{-1/2}
    \a{\l_n-\mu_{n-1}}^{-1/2}
    \prod_{2 \leq i \leq n-2} F_i(\mu_i)
    J(\mu) 
    d\mu
    ,
\end{align*}
which we rewrite as a convolution
\begin{align*}
J_n(\l)
&\gg
\norm{\l}^{-n+1}
\int F(t) G(-t) \, dt 
\\
F(t) 
&= 
\int_{\substack{
\mu_1 + \mu_{n-1} = t
\\ 
\l_1 \leq \mu_1 \leq -d/4
\\ 
d/4 \leq \mu_{n-1} \leq \l_n
}}
\abs{\l_1 - \mu_1}^{-1/2}
\abs{\l_n - \mu_{n-1}}^{-1/2}
d\mu 
\\
G
&=
 \bold{1}_{[-1,1]}
 *
\left(
 \bigast_{2 \leq i \leq n - 2} d_i^{-1} \bold{1}_{M_i}
 \right)
 .
\end{align*}
Since $M_i \subset [\l_2, \l_{n-1}]$ for $2 \leq i \leq n-2$, we have $G(-t) \neq 0 \implies \a{t} \leq d/100$. If $t \leq \l_1 + \l_n$, then substituting $y = \mu_1 - \l_1$ in the equation for $F$ gives 
\begin{align*}
    F(t) &= 
    \int_0^{t - d/4 - \l_1} y^{-1/2} 
    (\l_n - (t - (y + \l_1)))^{-1/2} 
    \, dy 
    \\
    &=
    \int_0^{t- d/4 - \l_1} y^{-1/2} (y + \l_1 + \l_n - t)^{-1/2} \, dy 
    .
\end{align*}
 Apply Lemma \ref{lem:1dsmall} with $a = \l_1 + \l_n - t$ and $T = t - d/4-\l_1$. The preceding inequalities for $\a{t}, \l_1$, and $\a{\l_1+\l_n}$ ensure $a \leq T$. This gives 
\begin{align*}
    F(t) \gg \log' \f{\norm{\l}}{\l_1 + \l_n - t}
    .
\end{align*}
Likewise if $t \geq \l_1 + \l_n$ then 
\begin{align*}
    F(t) \gg \log' \f{\norm{\l}}{\l_1 + \l_n - t}
    ,
\end{align*}
thus 
\begin{align*}
F(t) 
\gg
\log' \frac{\norm{\l}}{\a{t - (\l_1 + \l_n)}}
. 
\end{align*}
Since $\a{\l_1 + \l_n} \ll \a{\l_2} + \a{\l_{n-1}}$ we have $\a{t - (\l_1 + \l_n)} \ll \a{t} + \a{\l_2} + \a{\l_{n-1}}$ and 
\begin{align*}
F(t) 
\gg
\log' \frac{\norm{\l}}{\a{t} + \a{\l_2} + \a{\l_{n-1}}}
. 
\end{align*} 
Meanwhile, we have $\int G(t) \, dt \gg 1$, and the support of $G$ is contained in an interval of radius $\ll 1 + \a{\l_2} + \a{\l_{n-1}}$ around 0, so 
\begin{align*}
    \int F(t) G(-t) \, dt 
    \gg 
    \log' \frac{\norm{\l}}{1 + \a{\l_1} + \a{\l_n}}
\end{align*}
as desired. 
\end{proof}
\section{Lemmas}
Here we have separated out the main calculation for the two large gaps case.

\begin{lem}\label{lem:2d}
Let $A \leq B \leq C \leq D \leq E$, $T$,  
and $(L_i)_{1 \leq i \leq k}$ be
given with $\a{A-E} \leq T$. 
Then for $A+C \leq t \leq B + D$, 
\begin{align*}
    (*) = 
    \int_{\substack{
    x+y = t
    \\ 
    A \leq x \leq B
    \\
    C \leq y \leq D
    }}
    \left(
    \a{x - A}
    \a{x - B}
    \a{y - C}
    \a{y - D}
    \right)^{-1/2}
    \a{x-y}^{1/2}
    \a{y-E}^{1/2}
    \prod_{1 \leq i \leq k} \log' \frac{T}{\a{L_i - y}}
    \, dx
\end{align*}
is bounded above by 
\begin{align*}    
    &\ll_k 
    T \a{A-B}^{-1/2} \a{C-D}^{-1/2}
    \times 
    \\ 
    &\left[ 
    \left( \log' \f{\a{B-C}}{\a{B+C-t}} \right)^{k+1}  + 
    \left( \log' \f{\a{D-E}}{\a{A+D-t}} \right)^{k+1}  + 
    \left( \log' \f{T}{\a{A+C-t}} \right)^{k} + 
    \left( \log' \f{T}{\a{B+D-t}}\right)^{k} 
    \right]
    .
\end{align*}
\end{lem}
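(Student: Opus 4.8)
The plan is to turn $(*)$ into a one‑variable integral and control it by monotone rearrangement, after using subadditivity of the square root to peel the ``growing'' factors $\abs{x-y}^{1/2}$ and $\abs{y-E}^{1/2}$ off the inverse‑square‑root singularities. First I would parametrise the line $x+y=t$ by $x=t-y$; the constraints $A\le x\le B$, $C\le y\le D$ become $y\in[y_-,y_+]$ with $y_-=\max(C,\,t-B)$, $y_+=\min(D,\,t-A)$, the interval being nonempty exactly because $A+C\le t\le B+D$. Writing $\ell_0:=y_+-y_-$, one has $\ell_0=\min(B-A,\,D-C,\,t-A-C,\,B+D-t)$, and $t-B,C\le y_-$ while $t-A,D\ge y_+$, so on $[y_-,y_+]$ the integrand has inverse‑square‑root singularities only at $t-A,t-B,C,D$, with exactly one of $\{t-B,C\}$ at the left endpoint $y_-$ (the other just outside) and one of $\{t-A,D\}$ at $y_+$ (the other just outside).

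Second, from the chain $x\le B\le C\le y\le D\le E$ I would write $y-x=(B-x)+(C-B)+(y-C)$ and $E-y=(D-y)+(E-D)$ and apply $\sqrt{u+v+w}\le\sqrt u+\sqrt v+\sqrt w$, obtaining $\abs{x-y}^{1/2}\abs{y-E}^{1/2}\le\bigl[(B-x)^{1/2}+\abs{B-C}^{1/2}+(y-C)^{1/2}\bigr]\bigl[(D-y)^{1/2}+\abs{D-E}^{1/2}\bigr]$. Multiplying by $\abs{x-A}^{-1/2}\abs{x-B}^{-1/2}\abs{y-C}^{-1/2}\abs{y-D}^{-1/2}$, the pieces $(B-x)^{1/2}$, $(y-C)^{1/2}$, $(D-y)^{1/2}$ cancel $\abs{x-B}^{-1/2}$, $\abs{y-C}^{-1/2}$, $\abs{y-D}^{-1/2}$ respectively. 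This dominates $(*)$ by a sum of at most six integrals over $[y_-,y_+]$ against $\ell(y):=\prod_{i\le k}\log'\tfrac{T}{\abs{L_i-y}}$, each carrying a constant prefactor $1$, $\abs{B-C}^{1/2}$, $\abs{D-E}^{1/2}$, or $\abs{B-C}^{1/2}\abs{D-E}^{1/2}$, with integrand $\abs{y-(t-A)}^{-1/2}$ times a subset of $\{\abs{y-(t-B)}^{-1/2},\abs{y-C}^{-1/2},\abs{y-D}^{-1/2}\}$; crucially, $\abs{y-(t-B)}^{-1/2}$ and $\abs{y-C}^{-1/2}$ survive simultaneously only in the terms weighted by $\abs{B-C}^{1/2}$, and $\abs{y-D}^{-1/2}$ survives only in those weighted by $\abs{D-E}^{1/2}$.

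Third, I would estimate each of the six pieces by splitting $[y_-,y_+]$ at its midpoint. On the left half every factor whose singularity lies at or beyond $y_+$ (namely $\abs{y-(t-A)}^{-1/2}$, and $\abs{y-D}^{-1/2}$ when present) is $\ll(\text{gap})^{-1/2}$ with gap comparable to $B-A$, $D-C$, or $\ell_0$, and symmetrically on the right half, leaving at most two singular factors per half, located at or just outside the nearer endpoint. To these I apply the Hardy--Littlewood rearrangement inequality: rearrangement commutes with $u\mapsto u^k$ and $s\mapsto\log'(T/s)$ is nonincreasing, so $\ell^*\ll_k(\log'(T/x))^k$ on any interval of length $\le T$ (and $\ell_0\le D-C\le T$), while $\abs{y-p}^{-1/2}\1_{[y_-,y_+]}$ rearranges to $\ll(x+\rho)^{-1/2}$ with $\rho$ the distance from $p$ to $[y_-,y_+]$. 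Feeding these into (\ref{eq:logaverage}) reduces each half to $\int_0^{\ell_0}x^{-1/2}(\log'(T/x))^k\,dx$, which gives the power $k$, or, when two remaining singularities collide near the endpoint, to $\int_{\delta}^{\ell_0}x^{-1}(\log'(T/x))^k\,dx$, which produces one extra logarithm (power $k+1$); the colliding pairs are precisely $\{t-B,C\}$ near $y_-$ (separation $\abs{B+C-t}$) and $\{t-A,D\}$ near $y_+$ (separation $\abs{A+D-t}$), both of which occur only in the terms carrying $\abs{B-C}^{1/2}$ or $\abs{D-E}^{1/2}$, so after treating the two halves separately a power $k+2$ never occurs.

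Finally I would assemble the four terms by a dichotomy on $\ell_0$. If $\ell_0$ is comparable to $\min(B-A,D-C)$, the collision estimates plus the trivial bounds $\abs{B-C},\abs{D-E}\le T$ and $\abs{B-C}^{1/2}(D-C)^{1/2}\le T$, $(B-A)^{1/2}\abs{D-E}^{1/2}\le T$ (from $\abs{A-E}\le T$ and AM--GM) turn the weights $\abs{B-C}^{1/2}$, $\abs{D-E}^{1/2}$ into the prefactor $T\abs{A-B}^{-1/2}\abs{C-D}^{-1/2}$ and yield $\bigl(\log'\tfrac{\abs{B-C}}{\abs{B+C-t}}\bigr)^{k+1}$ and $\bigl(\log'\tfrac{\abs{D-E}}{\abs{A+D-t}}\bigr)^{k+1}$. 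If instead $\ell_0$ is small, it equals $t-A-C$ or $B+D-t$; bounding $\abs{x-y}^{1/2}\abs{y-E}^{1/2}\le T$, $\int_A^B((x-A)(B-x))^{-1/2}\,dx\le\pi$ over the short $x$‑range, and $\ell_0(D-C)\le T^2$ gives $\bigl(\log'\tfrac{T}{\abs{A+C-t}}\bigr)^{k}$ and $\bigl(\log'\tfrac{T}{\abs{B+D-t}}\bigr)^{k}$. Summing the cases gives the claim. The main obstacle, I expect, is exactly this assembly: across the four sub‑configurations ($y_-=C$ versus $t-B$, and $y_+=D$ versus $t-A$) one must verify that the prefactor $\abs{A-B}^{-1/2}\abs{C-D}^{-1/2}$ genuinely emerges and that every logarithm receives precisely the stated numerator ($\abs{B-C}$, $\abs{D-E}$, or $T$) and denominator; the individual estimates are routine applications of rearrangement and (\ref{eq:logaverage}), but keeping the collision scales straight is delicate.
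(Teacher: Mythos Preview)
Your approach is genuinely different from the paper's, and it is worth contrasting the two decompositions.

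The paper does not split the growing factors $\abs{x-y}^{1/2}\abs{y-E}^{1/2}$ at all. Instead it applies the elementary inequality $\abs{x-P}^{-1/2}\abs{x-Q}^{-1/2}\ll\abs{P-Q}^{-1/2}\bigl(\abs{x-P}^{-1/2}+\abs{x-Q}^{-1/2}\bigr)$ twice, once with $(P,Q)=(A,B)$ and once with $(P,Q)=(C,D)$, which extracts the prefactor $\abs{A-B}^{-1/2}\abs{C-D}^{-1/2}$ immediately and leaves four integrals $I(X,Y)$ with $X\in\{A,B\}$, $Y\in\{C,D\}$. For $I(A,C)$ and $I(B,D)$ the growing factors are bounded crudely by $T$ and rearrangement gives the power-$k$ terms. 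For $I(A,D)$ and $I(B,C)$, one growing factor is bounded by $T^{1/2}$ but the other, $\abs{y-E}^{1/2}$ respectively $\abs{x-y}^{1/2}$, is \emph{kept} inside the integral; Lemma~\ref{lem:1d} then reads off $(\log'(b/a))^{k+1}$ with $b$ coming from that retained factor (so $b=\abs{D-E}$ or $\abs{B-C}$ directly). This is why the numerators $\abs{B-C}$, $\abs{D-E}$ appear with no further work.

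Your route discards that damping by replacing $\abs{x-y}^{1/2}$ with the constant $\abs{B-C}^{1/2}$ plus boundary pieces. The collision integrals you obtain then carry $(\log'(T/\delta))^{k+1}$ rather than $(\log'(\abs{B-C}/\delta))^{k+1}$, and you must recover the smaller numerator in the assembly. Your sketch of that step (``turn the weights\ldots and yield $(\log'\tfrac{\abs{B-C}}{\abs{B+C-t}})^{k+1}$'') is where the real content hides: one has to split $\log'(T/\delta)\ll\log'(T/\abs{B-C})+\log'(\abs{B-C}/\delta)$ and then absorb the first piece via $\abs{B-C}^{1/2}(\log'(T/\abs{B-C}))^{k+1}\ll_k T^{1/2}$, with a separate argument when $\abs{B-C}<\delta$ (use $\abs{B-C}^{1/2}\le\delta^{1/2}$ and $\delta^{1/2}(\log'(\ell_0/\delta))^{k+1}\ll_k\ell_0^{1/2}$). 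None of this is hard, but it is not ``trivial bounds $\abs{B-C}\le T$ and AM--GM'' as you wrote; those alone give $(\log'(T/\delta))^{k+1}$, which for $\abs{B-C}\asymp\delta\ll T$ is genuinely larger than the stated bound. Your case-2 treatment (``$\int_A^B((x-A)(B-x))^{-1/2}\,dx\le\pi$'') is also not the right computation: on the short range the factors $\abs{y-C}^{-1/2}\abs{y-D}^{-1/2}$ contribute $(D-C)^{-1/2}$ and the remaining one-variable integral is what produces $(\log'(T/\ell_0))^k$.

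So: the strategy is sound and can be completed, but the assembly you flagged as ``delicate'' needs the log-splitting and $u^{1/2}(\log'(1/u))^m\ll_m 1$ tricks spelled out. The paper's decomposition sidesteps this entirely by letting Lemma~\ref{lem:1d} do the work with the growing factor still inside.
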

\begin{proof}
    For $x \in [P,Q]$ we have 
    \begin{align}\label{eq:simple}
    \a{x-P}^{-1/2}\a{x-Q}^{-1/2}
    \ll 
    \abs{P-Q}^{-1/2}
    \left( 
    \a{x-P}^{-1/2} + \a{x-Q}^{-1/2}
    \right)
    .
    \end{align}
    Using this identity twice with $(P,Q) = (A,B)$ and $(P,Q) = (C,D)$ and substituting $y=t-x$ gives
    \begin{align*}
    (*)
    \ll 
    \a{A-B}^{-1/2} \a{C-D}^{-1/2}
    \sum_{\substack{X \in \{ A,B\}
    \\
    Y \in \{ C, D \} }}
    I(X,Y)
    \end{align*}
    where $I(X,Y) =$
    \begin{align*}
    \int \limits_{\max(A,t-D)}^{\min(B,t-C)}
    \a{x-X}^{-1/2}
    \a{x-t+Y}^{-1/2}
    \a{2x-t}^{1/2}
    \a{x-t+E}^{1/2}
    \prod_{1 \leq i \leq k} \log' \frac{T}{\a{L_i - t + x}}
    dx
    .
    \end{align*}

    First consider $I(A,C)$. Extend the bounds of the integral to $[A,t-C]$. Bound $\a{2x-t}^{1/2}\a{x-t+E}^{1/2} \ll T$.
    Then
    \[
    I(A,C) \ll 
    T \int_A^{t-C} 
    \a{x-A}^{-1/2} 
    \a{x-(t-C)}^{-1/2}
    \prod_{1 \leq i \leq k} \log' \frac{T}{\a{L_i - t + x}}
    \, dx
    \]
    and using Equation (\ref{eq:simple}) with $(P,Q) = (A, t-C)$ and then using monotone rearrangement on each factor we obtain 
    \[
    I(A,C)
    \ll 
    T \a{t-C-A}^{-1/2}
    \int_0^{t-C-A} x^{-1/2} \left( \log' \f{T}{x} \right)^k \, dx 
    \ll_k 
    T
    \left( 
    \log' 
    \f{T}{\abs{A+C-t}}
    \right)^k
    .
    \]
    The calculation for $I(B,D)$ is similar to $I(A,C)$. 
    
    Now consider $I(A,D)$. Bound $\abs{2x-t}^{1/2} \ll T^{1/2}$. Suppose $t \leq A + D$. Substitute $x'=x-A$. The integral now runs from $0$ to $B-A$, extend it to $[0,T]$. The integral now reads 
    \[
    T^{1/2}
    \int_0^T 
    \abs{x'}^{-1/2} \abs{x'+A+D-t}^{-1/2}
    \abs{x'+A+E-t}^{1/2}
    \prod_{1 \leq i \leq k} \log' \frac{T}{\a{L_i - t + x' + A}}
    \, dx
    .
    \]
    Applying Lemma \ref{lem:1d} with $a = A + D - t$ and $b = A + E - t$ gives 
    \begin{align*}
    I(A,D) 
    &\ll_k 
    T \left(\log' \f{A + E - t}{A + D - t} \right)^{k+1}
    \\
    &= 
    T\left(\log'
    \left(
    1 + \f{E - D}{A + D - t}
    \right)
    \right)^{k+1}
    \\
    &\ll 
    T \left(\log' \f{\a{D-E}}{\a{A + D - t}} \right)^{k+1}
    .
    \end{align*}
    If $t \geq A + D$, then substitute $x' = x - t + D$, extend the bounds of the integral to $[0,T]$, and apply Lemma \ref{lem:1d} with $a = t - A - D, b = E - D$. The calculation for $I(B,C)$ is similar. 
\end{proof}

\begin{lem}\label{lem:2dsmall}
    Let $A \leq B \leq C \leq D$ be given with $\a{A - D} \leq T$. Then for $A + B \leq t \leq C + D$, 
    \begin{align*}
    \int_{\substack{
    x+y = t
    \\ 
    A \leq x \leq B
    \\
    C \leq y \leq D
    }}
    \left(
    \a{x - A}
    \a{x - B}
    \a{y - C}
    \a{y - D}
    \right)^{-1/2}
    \, dx
\end{align*}
is bounded above by 
\begin{align*}
    \ll 
    \a{A-B}^{-1/2}
    \a{C-D}^{-1/2}
    \left(
    \log' \f{T}{\a{B+C-t}}
    +
    \log' \f{T}{\a{A+D-t}}
    \right)
    .
\end{align*}
\end{lem}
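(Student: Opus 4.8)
This is the weightless case of Lemma~\ref{lem:2d} --- take $k=0$ and delete the factors $\a{x-y}^{1/2}\a{y-E}^{1/2}$ --- and my plan is to run a stripped-down version of that proof, which here is short enough to be essentially self-contained. The integration domain $\{x:A\le x\le B,\ C\le t-x\le D\}$ is the interval $[\max(A,t-D),\min(B,t-C)]$, which is non-empty precisely when $A+C\le t\le B+D$; outside that range the left side is $0$, so I may assume $A+C\le t\le B+D$. Applying (\ref{eq:simple}) once to $\a{x-A}^{-1/2}\a{x-B}^{-1/2}$ with $(P,Q)=(A,B)$ and once to $\a{y-C}^{-1/2}\a{y-D}^{-1/2}$ with $(P,Q)=(C,D)$, and then substituting $y=t-x$, bounds the left side by
\[
\a{A-B}^{-1/2}\a{C-D}^{-1/2}\sum_{\substack{X\in\{A,B\}\\ Y\in\{C,D\}}}I(X,Y),\qquad I(X,Y):=\int_{\max(A,t-D)}^{\min(B,t-C)}\a{x-X}^{-1/2}\a{x-(t-Y)}^{-1/2}\,dx.
\]

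Next I would bound the four corner integrals. The two ``aligned'' ones are absolute constants: since $[\max(A,t-D),\min(B,t-C)]\subseteq[A,t-C]$ and the integrand of $I(A,C)$ there equals $(x-A)^{-1/2}(t-C-x)^{-1/2}$, we get $I(A,C)\le\int_A^{t-C}(x-A)^{-1/2}(t-C-x)^{-1/2}\,dx=\pi$, and symmetrically $I(B,D)\ll1$ using the containment in $[t-D,B]$. For the two ``crossed'' ones, observe that one singular point of $I(A,D)$ --- namely $A$ or $t-D$ --- sits at the left endpoint $\max(A,t-D)$, while the other lies to its left at distance $a:=\a{A+D-t}$; so the substitution $u=x-\max(A,t-D)$ turns it into $I(A,D)=\int_0^L u^{-1/2}(u+a)^{-1/2}\,du$ with $L:=\min(B,t-C)-\max(A,t-D)\le\a{A-D}\le T$, and splitting at $u=a$ gives $I(A,D)\ll1+\log'(L/a)\ll\log'(T/\a{A+D-t})$. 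The mirror substitution anchored at the right endpoint gives $I(B,C)\ll\log'(T/\a{B+C-t})$. Summing the four estimates and absorbing the $O(1)$ terms into the $\log'$-factors (each $\ge\log2$) yields the claimed bound.

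The argument is bookkeeping, so I expect no genuine obstacle; the only small points of care are (i) which of $A$ and $t-D$ (resp.\ $B$ and $t-C$) occupies the relevant endpoint of the integration interval --- this depends only on the sign of $t-A-D$ (resp.\ $t-B-C$) and becomes irrelevant once $u$ is measured from that endpoint --- and (ii) the degenerate cases $\a{A+D-t}=0$, $\a{B+C-t}=0$, $A=B$, or $C=D$, in which the asserted right-hand side is $+\infty$ so the bound is vacuous.
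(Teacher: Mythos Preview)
Your proof is correct, but the route differs from the paper's. The paper does not rerun the argument of Lemma~\ref{lem:2d}; instead it derives Lemma~\ref{lem:2dsmall} \emph{from} Lemma~\ref{lem:2d} by a parameter shift --- substituting $A,B,C+T,D+T,D+2T,3T,t+T$ for $A,B,C,D,E,T,t$ with $k=0$ --- chosen so that the extra factors $\a{x-y}^{1/2}\a{y-E}^{1/2}$ in the integrand and the numerators $\a{B-C}$, $\a{D-E}$ in the logarithms all become $\asymp T$, after which dividing both sides by $T$ yields the stated bound. Your approach is instead a self-contained direct proof: you split into the four corner integrals $I(X,Y)$ exactly as in the proof of Lemma~\ref{lem:2d}, evaluate the aligned ones $I(A,C),I(B,D)$ exactly as Beta integrals ($=\pi$), and handle the crossed ones $I(A,D),I(B,C)$ by the substitution that reduces them to $\int_0^L u^{-1/2}(u+a)^{-1/2}\,du$, essentially reproving Lemma~\ref{lem:1dsmall}. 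Your version is more transparent and independent of Lemma~\ref{lem:2d}; the paper's is shorter but depends on spotting a non-obvious substitution.
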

\begin{proof}
    Substitute $A,B,C+T,D+T,D+2T,3T,t+T$ for $A,B,C,D,E,T,t$ in Lemma \ref{lem:2d}. We obtain that
    \begin{align*}
    &\int_{\substack{
    x+y = t+T
    \\ 
    A \leq x \leq B
    \\
    C +T \leq y \leq D +T
    }}
    \left(
    \a{x - A}
    \a{x - B}
    \a{y - C - T}
    \a{y - D - T}
    \right)^{-1/2}
    \a{x-y}^{1/2}
    \a{y-E-2T}^{1/2}
    \, dx
    \end{align*}
    is $\ll$ than
    \begin{align*}
    T
    \a{A-B}^{-1/2}
    \a{C-D}^{-1/2}
    \left(
    \log' \f{T+\a{B-C}}{\a{B+C-t}} + 
    \log' \f{T+\a{D-E}}{\a{A+D-t}}
    \right)
    .
    \end{align*}
     Note that $\a{x-y},\a{y-E-2T}$ in the integrand and $ T + \a{B-C}, T+\a{D-E}$ on the right hand side are all $\asymp T$, so dividing both sides by $T$ gives the desired inequality. 
\end{proof}

\begin{lem}\label{lem:1d}
For $0 < a, b < T$ and all $L_1, \dots, L_k$ we have
\begin{align*}
\int_0^T &x^{-1/2} (x+a)^{-1/2} (x+b)^{1/2} \prod_{1 \leq i \leq k} \log' \frac{T}{\abs{L_i + x}} \, dx
\ll_k
T^{1/2} \left(\log' \f{b}{a}\right)^{k+1}
.
\end{align*}
\end{lem}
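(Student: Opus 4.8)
The plan is to treat the trivial direction $b\le a$ separately, and otherwise to split the range of integration into $[0,b]$ and $(b,T]$, using monotone rearrangement on each piece. In the case $b\le a$ we have $x+b\le x+a$, so the integrand is at most $x^{-1/2}\prod_i\log'(T/\a{L_i+x})$. A direct level-set computation (of the kind carried out above for $\1_{[a,b]}\a{a-x}^{-1/2}\a{b-x}^{-1/2}$) shows that $\{x:\log'(T/\a{L_i+x})>t\}$ lies in an interval of length $2T/(e^t-2)$, so the monotone rearrangement of $\log'(T/\a{L_i+\cdot})\1_{[0,S]}$ is at most $\log'(2T/x)\1_{[0,S]}(x)$ for any $S>0$. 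Applying the Hardy--Littlewood inequality to the $k+1$ functions $x^{-1/2}\1_{[0,T]}$ (already decreasing) and $\log'(T/\a{L_i+\cdot})\1_{[0,T]}$ bounds the integral by $\int_0^T x^{-1/2}(\log'(2T/x))^k\,dx$, which is $\ll_k T^{1/2}$ by (\ref{eq:logaverage}) once one notes $\log'(2T/x)\ll\log'(T/x)$ on $(0,T]$. Since $\log'(b/a)\ge\log 2$ this is $\ll_k T^{1/2}(\log'(b/a))^{k+1}$, so from now on assume $b>a$, whence $\log'(b/a)>1$.

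Write $w(x)=x^{-1/2}(x+a)^{-1/2}(x+b)^{1/2}$ and split $\int_0^T=\int_0^b+\int_b^T$. On $(b,T]$ we have $x+a\ge x$ and $x+b\le 2x$, so $w(x)\ll x^{-1/2}$ and the argument of the previous paragraph gives $\int_b^T w\,\prod_i\log'\ll_k T^{1/2}\ll T^{1/2}(\log'(b/a))^{k+1}$. The substantive piece is $\int_0^b$, where $x+b\asymp b$, hence $w(x)\ll b^{1/2}\,x^{-1/2}(x+a)^{-1/2}$; since $x^{-1/2}(x+a)^{-1/2}$ is decreasing, rearrangement gives
\[
\int_0^b x^{-1/2}(x+a)^{-1/2}\prod_i\log'(T/\a{L_i+x})\,dx\le\int_0^b x^{-1/2}(x+a)^{-1/2}(\log'(2T/x))^k\,dx.
\]
The key elementary inequality is $\log'(2T/x)\ll\log'(T/b)+\log'(b/x)$ for $0<x\le b<T$ (write $2T/x=(2T/b)(b/x)$, take logs, and use $\log'(T/b),\log'(b/x)\ge\log 3$), which yields $(\log'(2T/x))^k\ll_k(\log'(T/b))^k+(\log'(b/x))^k$. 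Splitting $\int_0^b=\int_0^a+\int_a^b$, bounding $(x+a)^{-1/2}$ by $a^{-1/2}$ on $[0,a]$ and by $x^{-1/2}$ on $[a,b]$, and applying the two inequalities of (\ref{eq:logaverage}) with $T$ replaced by $b$, one obtains
\[
\int_0^b x^{-1/2}(x+a)^{-1/2}(\log'(2T/x))^k\,dx\ll_k(\log'(T/b))^k\log'(b/a)+(\log'(b/a))^{k+1}.
\]

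It then remains to multiply by $b^{1/2}$ and check that both $b^{1/2}(\log'(T/b))^k\log'(b/a)$ and $b^{1/2}(\log'(b/a))^{k+1}$ are $\ll_k T^{1/2}(\log'(b/a))^{k+1}$. The second is immediate from $b\le T$. For the first the point — and the only delicate step — is that although $\log'(T/b)$ is \emph{not} controlled by $\log'(b/a)$, it is cheap compared with $(T/b)^{1/2}$: since $u^{-1/2}(\log(2+u))^k$ is bounded on $[1,\infty)$ for each fixed $k$, we get $(\log'(T/b))^k\ll_k(T/b)^{1/2}$, and therefore $b^{1/2}(\log'(T/b))^k\log'(b/a)\ll_k T^{1/2}\log'(b/a)\le T^{1/2}(\log'(b/a))^{k+1}$ using $\log'(b/a)>1$. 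Combining the $[0,b]$ and $(b,T]$ contributions finishes the proof. The reason for localizing before rearranging is exactly the issue just flagged: a global rearrangement replaces $\log'(T/\a{L_i+x})$ by $\log'(2T/x)$ and leaves a spurious $\log'(T/a)$, which is genuinely too large; cutting the integral at $x=b$ first lowers the argument of the logarithm from $T$ down to essentially $b$, and the residual $\log'(T/b)$ is then absorbed into the gain $b^{1/2}\le T^{1/2}$.
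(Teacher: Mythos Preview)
Your proof is correct and follows essentially the same strategy as the paper: handle $b\le a$ trivially by dropping the $(x+b)^{1/2}/(x+a)^{1/2}$ factor, and for $a<b$ use monotone rearrangement to replace the $\log'(T/\a{L_i+x})$ by $\log'(T/x)$ (up to constants), split the integral at $a$ and $b$, apply the inequalities of (\ref{eq:logaverage}), and finally absorb the residual $\log'(T/b)$ factor via $(\log'(T/b))^k\ll_k (T/b)^{1/2}$.

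The only structural difference is the order of operations: the paper rearranges globally on $[0,T]$ \emph{first} (noting that $w(x)=x^{-1/2}(x+a)^{-1/2}(x+b)^{1/2}$ is itself decreasing on $[0,T]$, so one may simply set all $L_i=0$), and only afterwards splits at $a$ and $b$; you instead split at $b$ first and rearrange on each piece. Your final paragraph asserts that global rearrangement ``leaves a spurious $\log'(T/a)$, which is genuinely too large'' --- this is not so. The paper's global route does produce terms like $b^{1/2}(\log'(T/a))^k$, but the same decomposition $\log'(T/a)\ll\log'(T/b)+\log'(b/a)$ and the same absorption $(b/T)^{1/2}(\log'(T/b))^j\ll_j 1$ that you use dispose of it. So the localization is a matter of taste, not necessity; the two arguments are essentially identical.
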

\begin{proof}
Since $x^{-1/2} (x+a)^{-1/2} (x+b)^{1/2}$ is decreasing on $[0,T]$, we may use monotone rearrangement and assume $L_i = 0$ for all $i$. 
If $0 < a < b < T$ then the integral is 
\begin{align*}
    &\ll 
    \left[
    a^{-1/2} b^{1/2} \int_0^a x^{-1/2} 
    + b^{1/2} \int_a^b x^{-1}
    + \int_b^T x^{-1/2}
    \right] 
    \left(\log' \frac{T}{x}\right)^k \, dx
    \\ 
    &\ll_k
    b^{1/2} \left(\log' \frac{T}{a}\right)^k
    + b^{1/2} \left( \log' \frac{b}{a}\right) \left(\log' \frac{T}{a}\right)^k 
    + T^{1/2}
    \\ 
    &\ll_k
    T^{1/2} \frac{b^{1/2}}{T^{1/2}}
    \left( \log' \f{T}{b} + \log' \f{b}{a} \right)^k + T^{1/2} \f{b^{1/2}}{T^{1/2}} \left(\log \f{b}{a}\right) 
    \left(\log' \f{T}{b} + \log' \f{b}{a}\right)^k
    + T^{1/2}
    \\ 
    &\ll_k
    T^{1/2} \left(\log' \f{b}{a}\right)^{k+1}
\end{align*}
where on the third line, 
we use the fact that $(b/T)^{1/2}(\log' (T/b))^k \ll_k 1$.
Otherwise if $0 < b < a < T$ then $x^{-1/2} (x+a)^{-1/2} (x+b)^{1/2} \leq x^{-1/2}$
 and the integral is 
 \begin{align*}
\leq \int_0^T x^{-1/2} \left(\log' \frac{T}{x}\right)^k \, dx
\ll_k 
T^{1/2}.
 \end{align*}
 \end{proof}
 \begin{lem}\label{lem:1dsmall}
 For $0 < a \leq T$,
 \begin{align*}
     \int_0^T x^{-1/2} (x+a)^{-1/2} \, dx 
     \asymp \log' (T/a) 
     .
    \end{align*}
 \end{lem}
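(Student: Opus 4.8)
The plan is to reduce everything to elementary estimates by splitting the integral at $x = a$. On the range $[0,a]$ one has $x + a \asymp a$, so this piece is $\asymp a^{-1/2}\int_0^a x^{-1/2}\,dx = 2$, contributing a quantity $\asymp 1$. On the range $[a,T]$ one has $x + a \asymp x$, so this piece is $\asymp \int_a^T x^{-1}\,dx = \log(T/a)$. Adding the two contributions shows the full integral is $\asymp 1 + \log(T/a)$, with absolute implied constants.

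It then remains to compare $1 + \log(T/a)$ with $\log'(T/a) = \log(2 + T/a)$. Setting $u = T/a \ge 1$, I would note that $\log(2+u) \le \log(3u) = \log 3 + \log u \ll 1 + \log u$, and in the other direction $\log(2+u) \ge \max(1, \log u) \ge \tfrac{1}{2}(1+\log u)$ since $\log 3 > 1$; together these give $1 + \log(T/a) \asymp \log'(T/a)$, which finishes the proof.

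As an alternative that avoids the case split, one can simply use the exact antiderivative $\frac{d}{dx}\bigl[2\log(\sqrt{x} + \sqrt{x+a})\bigr] = \bigl(x(x+a)\bigr)^{-1/2}$, so that the integral equals $2\log\frac{\sqrt{T} + \sqrt{T+a}}{\sqrt{a}}$; since $0 < a \le T$ forces $\sqrt{T} \le \sqrt{T+a} \le \sqrt{2T}$, the argument of the logarithm is $\asymp \sqrt{T/a}$ and bounded below by $2$, and the same elementary comparison as above yields the claim.

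There is essentially no obstacle in this lemma; the only point requiring a moment's care is the transition from $1 + \log(T/a)$ to $\log'(T/a)$, which is forced by the convention $\log' x = \log(2+x)$ (chosen so as to stay bounded away from $0$), and this is dispatched by the two-line comparison indicated above.
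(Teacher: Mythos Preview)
Your proof is correct and follows essentially the same approach as the paper: split the integral at $x=a$, use $(x+a)\asymp a$ on $[0,a]$ and $(x+a)\asymp x$ on $[a,T]$ to obtain $\asymp 1+\log(T/a)$, which is $\asymp\log'(T/a)$. You are slightly more explicit than the paper in checking the final comparison $1+\log(T/a)\asymp\log(2+T/a)$, and your alternative via the exact antiderivative $2\log(\sqrt{x}+\sqrt{x+a})$ is a nice bonus not present in the paper.
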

 \begin{proof}
         \begin{align*}
    \int_0^T 
    x^{-1/2} 
    (x+a)^{-1/2} \, dx
    &= 
    \int_0^a 
    x^{-1/2} 
    (x+a)^{-1/2} \, dx
    +
    \int_a^T
    x^{-1/2} 
    (x+a)^{-1/2} \, dx
    \\
    &\asymp 
    a^{-1/2}
    \int_0^a 
    x^{-1/2} 
    \, dx
    +
    \int_a^T
    x^{-1} \, dx
    \\
    &\asymp
    1
    +
    \log(T/a)
    .
    \end{align*}
 \end{proof}

\bibliographystyle{plain} 
\bibliography{refs}

\begin{thebibliography}{1}

\bibitem{marshallPaley}
Farrell Brumley and Simon Marshall.
\newblock Lower bounds for maass forms on semisimple groups.
\newblock {\em Compositio Mathematica}, 156(5):959--1003, 2020.

\bibitem{rearrangement}
Almut Burchard.
\newblock A short course on rearrangement inequalities.
\newblock {\em Lecture notes, IMDEA Winter School, Madrid}, 2009.

\bibitem{duistermaat}
JJ~Duistermaat.
\newblock On the similarity between the {I}wasawa projection and the diagonal part.
\newblock {\em Mem. Soc. Math. France}, (15):129--138, 1984.

\bibitem{fanpall}
Ky~Fan and Gordon Pall.
\newblock Imbedding conditions for {H}ermitian and normal matrices.
\newblock {\em Canadian Journal of Mathematics}, 9:298--304, 1957.

\bibitem{bart}
Bart Michels.
\newblock {\em The maximal growth of automorphic periods and oscillatory integrals for maximal flat submanifold}.
\newblock PhD thesis, Paris 13, 2022.

\bibitem{pretrace}
T.~Tamagawa.
\newblock On {S}elberg’s trace formula.
\newblock {\em J. Fac. Sci. Univ. Tokyo}, page 363–386, 1960.

\bibitem{zelditch}
Steven Zelditch.
\newblock {K}uznecov sum formulae and {S}zegö limit formulae on manifolds.
\newblock {\em Communications in partial differential equations}, 17(1-2):221--260, 1992.

\end{thebibliography}

\end{document}